\newtheorem{theorem}{Theorem}[section] 
\newtheorem{lemma}[theorem]{Lemma}     
\newtheorem{proposition}[theorem]{Proposition}
\newtheorem{definition}[theorem]{Definition}
\newtheorem{remark}[theorem]{Remark}
\title[$W^{2,p}$ estimates]
 {$W^{2,p}$ interior estimates of fully nonlinear elliptic equations} 
\author{Dongsheng Li and Kai Zhang}
\begin{document}
\maketitle

\begin{abstract}
In this paper, we generalize the $W^{2,p}$ interior estimates of fully nonlinear elliptic equations that were obtained by Caffarelli in \cite{Ca}.  The generalizations are carried out in two directions. One is that we relax the regularity requirement on the "constant coefficients" equations and the other one is that we broaden the range of $p$.
\end{abstract}

\section{Introduction}
\label{}
This paper deals with the following equation:
\begin{equation}\label{eq1.1}
 F(D^2u,x)=f(x)~~~~\mbox{in}~~B_1,
\end{equation}
where $B_1$ is the unit ball in $R^n~(n\geq 2)$ and $F:S(n)\times B_1\rightarrow R$ is uniformly elliptic, i.e., there exist $0<\lambda\leq \Lambda$ such that for $a.e.$ $x\in B_1$,
\begin{equation*}
    \lambda\|N\|\leq F(M+N,x)-F(M,x)\leq \Lambda\|N\|,~~~~\forall~M,N\in S(n),~N\geq 0,
\end{equation*}
where $S(n)$ denotes the set of $n\times n$ symmetric matrices and $\|N\|$ the spectral radius, and $N\geq 0$ means the nonnegativeness.

$W^{2,p}$ estimates will be obtained by using perturbation method. Roughly speaking, suppose that all solutions to $F(D^2u,x_0)=0$ have sufficient regularity for all $x_0\in B_1$ and the oscillation of $F$ in $x$ is small enough, then the desired regularity can be obtained for $F(D^2u,x)=f$ in $B_{1/2}$. This was done by Caffarelli in his celebrated work \cite{Ca} (see also Chapter 7 \cite{C-C}), where he required that $F(D^2u,x_0)=0$ satisfied $C^{1,1}$ interior estimates and obtained $W^{2,p}$ estimates of (\ref{eq1.1}) for $n<p<+\infty$. Here we will generalize this result to less of a requirement on the regularity of $F(D^2u,x_0)=0$ and a larger range of $p$.

In this paper, we will use $C^2-$viscosity solutions and $W^{2,p}-$viscosity solutions whose definitions can be found in many papers. For instance, see Definition 2.3 in \cite{C-C} for the former (called viscosity solutions there) and  Definition 2.1 in \cite{C-C-K-S} for the later (called $L^p-$ viscosity solutions there). In order to state our results clearly, we need the following definitions.

\begin{definition}\label{de3.1}
We say that $F(D^2w,x_0)$ has $C^{1,1}$ interior estimates with constant $c_e$ if for any $w_0\in C(\partial B_1)$, there exists a $C^2-$viscosity solution $w \in C^2(B_1)\cap C(\bar{B}_1)$ of
\begin{equation*}
 \left\{\begin{aligned}
 F(D^2w(x),x_0)&=0~~~~\mbox{in}~~B_1;\\
 w&=w_0~~~~\mbox{on}~~\partial B_1,
\end{aligned}
\right.
\end{equation*}
such that
\begin{equation*}\label{eqn3.1}
\|w\|_{C^{1,1}(\bar{B}_{1/2})}\leq c_e\|w_0\|_{L^{\infty}(\partial B_1)}.
\end{equation*}
\end{definition}

Weaker $W^{2,q}$ interior estimates requirement will be used in this paper:

\begin{definition}\label{de3.1}
We say that $F(D^2w,x_0)$ has $W^{2,q}$ interior estimates with constant $c_e$ if for any $w_0\in C(\partial B_1)$, there exists a $W^{2,q}-$viscosity solution $w \in W^{2,q}_{loc}(B_1)\cap C(\bar{B}_1)$ of
\begin{equation*}
 \left\{\begin{aligned}
 F(D^2w(x),x_0)&=0~~~~\mbox{in}~~B_1;\\
 w&=w_0~~~~\mbox{on}~~\partial B_1,
\end{aligned}
\right.
\end{equation*}
such that
\begin{equation}\label{eq3.1}
\|w\|_{W^{2,q}(B_{1/2})}\leq c_e\|w_0\|_{L^{\infty}(\partial B_1)}.
\end{equation}
\end{definition}

To measure the oscillation of $F$ in $x$, we need the following definition.

\begin{definition}\label{de3.2}
We define
\begin{equation*}
    \beta (x,x_0)=\beta_{F}(x,x_0)=\sup_{M\in S(n)\backslash\{0\}}\frac{|F(M,x)-F(M,x_0)|}{\|M\|}
\end{equation*}
and denote
\begin{equation*}
\beta(x):=\beta(x,0).
\end{equation*}
\end{definition}

In 1989, Caffarelli proved the following (Theorem 7.1\cite{C-C}, see also Theorem 1\cite{Ca}):
\begin{proposition}\label{prop1}
Let $u$ be a $C^2-$viscosity solution of (\ref{eq1.1}). Assume that $F(0,\cdot)\equiv0$ in $B_1$ and that $F(D^2w,x_0)$ has $C^{1,1}$ interior estimates with constant $c_e$ for any $x_0\in B_1$. Let $n<p<+\infty$ and suppose that $f\in L^p(B_1)$.

Then there exist positive constants $\beta_0$ and $C$ depending only on $n$, $\lambda$, $\Lambda$, $c_e$ and $p$, such that if
\begin{equation*}\label{}
     \left(\frac{1}{|B_r(x_0)|}\int_{B_r(x_0)}\beta(x,x_0)^{n}dx\right)^{\frac{1}{n}}\leq \beta_0,~~\forall~B_r(x_0)\subset B_1,
\end{equation*}
then $u\in W^{2,p}(B_{1/2})$ and
\begin{equation*}\label{}
    \|u\|_{W^{2,p}(B_{1/2})}\leq C\left(\|u\|_{L^{\infty}(B_1)}+\|f\|_{L^{p}}\right).
\end{equation*}
\end{proposition}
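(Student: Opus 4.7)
The plan is to follow Caffarelli's perturbation method, now with the $L^n$-smallness hypothesis on $\beta$ playing the role of the usual $L^\infty$-smallness. The scheme has three parts: a quantitative approximation lemma at each scale, a decay estimate for the distribution function of the second derivatives via a Calder\'on--Zygmund cube decomposition, and a summation step that converts decay into an $L^p$ bound.

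For the approximation step I would normalize so that $\|u\|_{L^\infty(B_1)}\le 1$, and on each ball $B_r(x_0)\subset B_1$ compare $u$ with the solution $h$ of the frozen problem $F(D^2h,x_0)=0$ in $B_r(x_0)$ with $h=u$ on $\partial B_r(x_0)$. By the $C^{1,1}$ interior estimate assumption, rescaling yields $\|D^2h\|_{L^\infty(B_{r/2}(x_0))}\le c_e r^{-2}\|u\|_{L^\infty(\partial B_r(x_0))}$. The difference $u-h$ satisfies Pucci-type inequalities with source bounded pointwise by $|f|+\beta(x,x_0)\|D^2h(x)\|$, and the Alexandrov--Bakelman--Pucci estimate for $W^{2,p}$-viscosity solutions gives
\begin{equation*}
\|u-h\|_{L^\infty(B_r(x_0))}\le C r\,\|f\|_{L^n(B_r(x_0))}+C r\,\|D^2h\|_{L^\infty}\,\|\beta(\cdot,x_0)\|_{L^n(B_r(x_0))}.
\end{equation*}
Choosing $\beta_0$ sufficiently small forces the second term to be as small as desired, so at each scale $r$ the function $u$ is $L^\infty$-close to a $C^{1,1}$ approximant with a uniformly controlled Hessian bound.

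The second step is the measure-decay argument. Let $\Theta(u)(x)$ denote the smallest $M\ge 0$ such that $u$ can be touched from above and below at $x$ by paraboloids of opening $M$; then $u\in W^{2,p}(B_{1/2})$ is equivalent to $\Theta(u)\in L^p(B_{1/2})$, with comparable norms up to $\|u\|_{L^\infty}$. Applying the approximation lemma, rescaled at each cube of a Calder\'on--Zygmund decomposition of $\{\Theta(u)>M^k\}$, together with the Hessian bound on $h$, yields a recursive inequality
\begin{equation*}
|\{\Theta(u)>M^{k+1}\}\cap B_{1/2}|\le \mu\,|\{\Theta(u)>M^k\}\cap B_{1/2}|+|E_k|,
\end{equation*}
where $E_k$ is a sublevel set tied to a maximal function of $|f|$, $M>1$ depends only on $n,\lambda,\Lambda,c_e$, and $\mu\in(0,1)$ can be made arbitrarily small by shrinking $\beta_0$. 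Iterating this recursion, weighting by $M^{kp}$, and summing produces
\begin{equation*}
\sum_{k\ge 0} M^{kp}\,|\{\Theta(u)>M^k\}\cap B_{1/2}|\le C\bigl(\|u\|_{L^\infty(B_1)}^p+\|f\|_{L^p(B_1)}^p\bigr),
\end{equation*}
the sum converging since $p<\infty$ lets one arrange $\mu M^p<1$. Translating from $\Theta(u)$ back to the Hessian gives the stated $W^{2,p}$ estimate.

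The main obstacle I anticipate is the interplay in the approximation lemma between the two sources in the Pucci inequality: $f$ is in $L^p$ with $p>n$, whereas $\beta(\cdot,x_0)\|D^2h\|$ is controlled only in $L^n$. The decisive point is that at each Calder\'on--Zygmund scale the $\beta$-contribution must be absorbed into the multiplicative factor $\mu$, so $\beta_0$ has to be chosen small enough to keep $\mu M^p<1$; this is what pins down the dependence of the constants on $p$. The $f$-contribution is handled cleanly by the Hardy--Littlewood maximal inequality thanks to $p>n$, but carrying the $\beta$-term through the iteration without losing the smallness is the delicate step on which the whole argument rests.
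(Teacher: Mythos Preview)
Your outline follows the same route as the paper (which is Caffarelli's original scheme), and the three-step structure you describe is exactly what is carried out in Section~3. Two technical points are glossed over, however, and they are where the real work lies---not the $\beta$-versus-$f$ bookkeeping you flagged.

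First, in the approximation step your ABP inequality invokes $\|D^2h\|_{L^\infty}$ on the full ball $B_r(x_0)$, but the $C^{1,1}$ interior estimate only controls $D^2h$ on $B_{r/2}(x_0)$ (as you yourself write), and $\|D^2h\|$ may blow up near $\partial B_r(x_0)$. The paper's approximation lemma (Lemma~\ref{le3.6}) handles this with a boundary-strip argument: one works on $B_{r(1-\delta)}$, uses global H\"older continuity of $u$ and $h$ to bound $u-h$ on $\partial B_{r(1-\delta)}$ by $C\delta^{\alpha/2}$, controls $\|D^2h\|$ on the smaller ball with a negative power of $\delta$, and then optimizes in $\delta$ to extract a small power $\varepsilon^\gamma$ in the final estimate. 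Without this, your displayed ABP bound is not justified.

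Second, the jump from ``$u-h$ is $L^\infty$-small and lies in a Pucci class with small right-hand side'' to the recursive measure inequality is not automatic. One needs the universal power-decay estimate $|A_t(u-h,\Omega)\cap Q_1|\le C_1 t^{-\mu}$ for Pucci-class functions (Lin's $W^{2,\mu}$ estimate, Lemma~\ref{le3.5} in the paper). Combined with $|A_t(h,\Omega)\cap Q_1|=0$ for $t$ larger than the $C^{1,1}$ bound, this yields $|A_{M}(u,\Omega)\cap Q_1|\le\bar\varepsilon$ after choosing $M$ large and $\varepsilon_1$ small (Lemma~\ref{le3.7}); that is the content behind your recursive inequality. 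Your sketch does not mention this ingredient, and $L^\infty$ smallness of $u-h$ alone does not give control of $\Theta(u-h)$.
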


We will relax the condition that $F(D^2w,x_0)$ has $C^{1,1}$ interior estimates to that $F(D^2w,x_0)$ has $W^{2,q}$ interior estimates. Of course, we need a closer connection between (\ref{eq1.1}) and $F(\cdot,x_0)=0$ in the sense that $p_2>n$ for the integrability of $\beta(x,x_0)$. Actually, we have:

\begin{theorem}\label{thh1.2}
Let $n<p<p_1<+\infty$ and
\begin{equation*}
\frac{1}{p_1}+\frac{1}{p_2}=\frac{1}{n}.
\end{equation*}
Let $u$ be a $W^{2,p}-$viscosity solution of (\ref{eq1.1}). Assume that $F(0,\cdot)\equiv0$ in $B_1$ and that $F(D^2w,x_0)$ has $W^{2,p_1}$ interior estimates with constant $c_e$ for $a.e.$ $x_0\in B_1$. Suppose that $f\in L^p(B_1)$.

Then there exist positive constants $\beta_0$ and $C$ depending only on $n$, $\lambda$, $\Lambda$, $c_e$, $p$, $p_1$ and $p_2$, such that if
\begin{equation*}\label{eq1.2}
    \Bigg(\frac{1}{|B_r(x_0)|}\int_{B_r(x_0)}\beta(x,x_0)^{p_2}dx\Bigg)^{\frac{1}{p_2}}\leq \beta_0,~~\forall~B_r(x_0)\subset B_1,
\end{equation*}
then $u\in W^{2,p}(B_{1/2})$ and
\begin{equation*}\label{eq1.3}
    \|u\|_{{W^{2,p}(B_{1/2})}}\leq C\left(\|u\|_{{L^{\infty}}(B_1)}+\|f\|_{{L^{p}}(B_1)}\right).
\end{equation*}
\end{theorem}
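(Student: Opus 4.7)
The plan is to follow Caffarelli's perturbation scheme from \cite{Ca,C-C}, replacing the $C^{1,1}$ interior estimate for the constant-coefficient equation by the weaker $W^{2,p_1}$ estimate. After the standard normalization reducing to $\|u\|_{L^\infty(B_1)}\le1$ and $\|f\|_{L^p(B_1)}\le\delta$, it suffices to show that Caffarelli's quadratic-touching function $\Theta(u,B_1)(x)$ (Chapter~7 of \cite{C-C}) lies in $L^p(B_{1/2})$ with the asserted bound; this is equivalent to $u\in W^{2,p}(B_{1/2})$.

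\textbf{Approximation lemma.} On every sub-ball $B_r(x_0)\subset B_1$ I would solve the frozen Dirichlet problem $F(D^2h,x_0)=0$ in $B_r(x_0)$ with $h=u$ on $\partial B_r(x_0)$, obtaining $h\in W^{2,p_1}(B_{r/2}(x_0))$ whose $W^{2,p_1}$-norm is controlled by the hypothesis after scaling. Using the stability theorem for $L^p$-viscosity solutions in \cite{C-C-K-S}, the difference $w=u-h$ is a $W^{2,p}$-viscosity solution of an equation sandwiched between the Pucci extremal operators with right-hand side bounded in absolute value by $|f(x)|+\beta(x,x_0)\|D^2h(x)\|$. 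H\"older's inequality
\[
\int_{B_{r/2}}\bigl(\beta(x,x_0)\|D^2h(x)\|\bigr)^n\,dx \le \left(\int_{B_{r/2}}\beta^{p_2}\right)^{n/p_2}\left(\int_{B_{r/2}}\|D^2h\|^{p_1}\right)^{n/p_1}
\]
uses precisely the exponent identity $\tfrac{1}{p_1}+\tfrac{1}{p_2}=\tfrac{1}{n}$. Combined with the rescaled smallness of $\beta$, the $W^{2,p_1}$-bound on $h$, and the ABP maximum principle, this yields $\|w\|_{L^\infty(B_{r/2})}\le\varepsilon r^2$ once $\beta_0$ and $\delta$ are small enough.

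\textbf{Measure decay and iteration.} With the approximation lemma I would establish a one-step density decay
\[
|\{\Theta(u,B_1)>M\}\cap B_{r/2}(x_0)|\le(1-\tau)|B_{r/2}(x_0)|
\]
for universal $M,\tau$. Writing $\Theta(u)\le\Theta(u-h)+\Theta(h)$: the term $\Theta(u-h)$ is handled by the standard measure-estimate lemma for Pucci subsolutions with small $L^\infty$-norm. For $\Theta(h)$, Caffarelli's original argument invoked the pointwise bound $\Theta(h)\le c_e$ coming from $C^{1,1}$; with only $W^{2,p_1}$ control I would instead bound $\Theta(h)\le C\|D^2h\|$ a.e.\ and discard the ``bad set'' $\{\|D^2h\|>M/2\}$, whose density after rescaling to $B_1$ is at most $CM^{-p_1}$ by Chebyshev. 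For $M$ large enough (allowed because $p_1<\infty$), this bad set is negligible and the one-step decay closes. A Vitali/Calder\'on--Zygmund iteration along dyadic scales, as in Chapter~7 of \cite{C-C}, yields the geometric decay $|\{\Theta(u,B_1)>M^k\}\cap B_{1/2}|\le C(1-\tau)^k$, and integrating the distribution function gives $\Theta(u,B_1)\in L^p(B_{1/2})$ whenever $p<p_1$.

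\textbf{Main obstacle.} The essential new difficulty is precisely the extra bad set $\{\|D^2h\|>M/2\}$ introduced by the weaker $W^{2,p_1}$ hypothesis: since its density is only polynomial in $M$, the distribution-function sum converges only for $p<p_1$, which explains the strict inequality in the theorem. A secondary technical point is to carry out the approximation step entirely in the $W^{2,p}$-viscosity framework -- the stability theorems of \cite{C-C-K-S} are needed to deduce that $u-h$ satisfies the Pucci-sandwiched inequality with the $L^n$ right-hand side constructed above -- and to track the scaling of $\|D^2h\|_{L^{p_1}}$, $\|\beta\|_{L^{p_2}}$, and $\|f\|_{L^p}$ with $r$ so that the decay estimate is truly scale-invariant.
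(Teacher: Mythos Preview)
Your overall strategy is exactly that of the paper: normalize, prove an approximation lemma for the frozen-coefficient solution $h$ using H\"older with exponents $p_1,p_2$ and the $W^{2,p_1}$ interior estimate, feed the smallness of $u-h$ into the Pucci measure lemma, and iterate via a Calder\'on--Zygmund cube decomposition. The paper in fact proves the more general statement with $p_0\in(n-\varepsilon_0,\infty)$ in place of $n$ and recovers Theorem~\ref{thh1.2} as the special case $p_0=n$, but the argument is the one you describe.

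There is, however, one genuine gap. The step ``bound $\Theta(h)\le C\|D^2h\|$ a.e.'' is false as written: $\Theta(h,\Omega)(x_0)$ asks for a paraboloid that touches $h$ \emph{globally on $\Omega$}, so it is controlled not by $D^2h(x_0)$ but by the behaviour of $D^2h$ on all of $\Omega$. A function can have $D^2h(x_0)=0$ at a point and still have $\Theta(h,\Omega)(x_0)$ arbitrarily large, because large second derivatives elsewhere obstruct any global touching paraboloid of small opening. What you actually need, and what the paper supplies as a separate lemma (Lemma~\ref{le3.4}), is the integral bound
\[
\|\Theta(h,\Omega)\|_{L^{p_1}(\Omega)}\le C\|h\|_{W^{2,p_1}(\Omega)}\qquad (p_1>n/2),
\]
proved by extending $h$ to $\mathbb{R}^n$, writing a second-order Taylor remainder, and controlling it pointwise by a combination of $|D^2\tilde h(x_0)|$ and the maximal function $\bigl(m(|D^2\tilde h|^{q_1})(x_0)\bigr)^{1/q_1}$ for some $q_1<p_1$ via Morrey--Poincar\'e. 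Once this is in hand, Chebyshev gives the Lebesgue bound $|\{\Theta(h)>t\}\cap Q_1|\le C t^{-p_1}$ that you correctly identify as the source of the restriction $p<p_1$; your subsequent choice of $M$ and $\bar\varepsilon$ so that $M^{p_1}\bar\varepsilon$ is bounded below while $M^{p}\bar\varepsilon<1$ is then exactly the paper's (equation~\eqref{add_eq3.2}).

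A smaller point: the iterated decay is not purely geometric. In the Calder\'on--Zygmund step the ``bad'' cubes must also absorb the superlevel sets of the maximal function of $|f|^{p_0}$, so the recursion is $\alpha_{k+1}\le\bar\varepsilon(\alpha_k+\beta_k)$ with $\beta_k=|\{m(f^{p_0})>(c M^k)^{p_0}\}\cap Q_1|$; the series $\sum M^{pk}\beta_k$ is then controlled because $f\in L^p$. You allude to $f$ in the one-step estimate but should carry it through the iteration explicitly.
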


\begin{remark}\label{re1.00} (i) Since $W^{2,\infty}(B_{1/2})=C^{1,1}(\bar B_{1/2})$, we see that
Proposition \ref{prop1} is a special case of Theorem \ref{thh1.2}.

(ii) It should be noted that the viscosity solutions of $F(D^2u,x_0)=0$ do not necessarily lie in $C^{1,1}$. In fact, Nadirashvili et al. formulated equations $F(D^2u)=0$ who have solutions in $W^{2,p}$ for some $n<p$ and not in $C^{1,1}$ (see \cite{N-V-3}- \cite{N-T-V}).

(iii) The condition $F(0,\cdot)\equiv0$ is not essential since we may consider $G(M,x)=F(M,x)-F(0,x)$.

(iv) Proposition \ref{prop1} requires that $F$ is continuous in $x$ whereas we don't need this assumption.
\end{remark}

To prove Proposition \ref{prop1} and Theorem \ref{thh1.2}, the Aleksandrov-Bakelman-Pucci maximum principle and Harnack inequality will be used essentially, where $f\in L^p$ with $p\geq n$ is needed.
Relying on Fok's results (Theorem 3.1 and Theorem 5.20 in \cite{Fo}), we see that the range of $p$ can be enlarged to $p\geq n-\epsilon_0$, where
$\epsilon_0>0$ depends only on $n$, $\lambda$ and $\Lambda$. Accordingly, we generalize Theorem \ref{thh1.2} to the following.

\begin{theorem}\label{th1.2}
Let $n-\varepsilon_0< p_0<p<p_1<+\infty$ and
\begin{equation*}
\frac{1}{p_1}+\frac{1}{p_2}=\frac{1}{p_0},
\end{equation*}
where $\varepsilon_0>0$ originates from Theorem 3.1 in \cite{Fo} and depends only on $n$, $\lambda$ and $\Lambda$.
Let $u$ be a $W^{2,p}-$viscosity solution of (\ref{eq1.1}). Assume that $F(0,\cdot)\equiv0$ in $B_1$ and that $F(D^2w,x_0)$ has $W^{2,p_1}$ interior estimates with constant $c_e$ for $a.e.$ $x_0\in B_1$. Suppose that $f\in L^p(B_1)$.

Then there exist positive constants $\beta_0$ and $C$ depending only on $n$, $\lambda$, $\Lambda$, $c_e$, $p$, $p_0$, $p_1$ and $p_2$ such that if
\begin{equation*}\label{eq1.2}
    \Bigg(\frac{1}{|B_r(x_0)|}\int_{B_r(x_0)}\beta(x,x_0)^{p_2}dx\Bigg)^{\frac{1}{p_2}}\leq \beta_0,~~\forall~B_r(x_0)\subset B_1,
\end{equation*}
then $u\in W^{2,p}(B_{1/2})$ and
\begin{equation*}\label{eq1.3}
    \|u\|_{{W^{2,p}(B_{1/2})}}\leq C\left(\|u\|_{{L^{\infty}}(B_1)}+\|f\|_{{L^{p}}(B_1)}\right).
\end{equation*}
\end{theorem}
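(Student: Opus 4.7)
The plan is to follow the strategy used for Theorem \ref{thh1.2}, the only structural change being that every appeal to the Aleksandrov-Bakelman-Pucci maximum principle and to the interior Harnack inequality at integrability exponent $n$ is replaced by Fok's corresponding versions at exponent $p_0 \in (n-\varepsilon_0, n]$ (Theorem 3.1 and Theorem 5.20 in \cite{Fo}). Since $\varepsilon_0$ is fixed and small, the scaling exponent $2 - n/p_0$ stays strictly positive and uniformly bounded below, so the proof machinery of the $p>n$ case adapts line by line with $n$ replaced by $p_0$.

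The first ingredient I would establish is an approximation lemma. Given a $W^{2,p}$-viscosity solution $u$ of \eqref{eq1.1} on a ball $B_r(x_0)\subset B_1$, let $w$ be the $W^{2,p_1}$-viscosity solution of $F(D^2 w, x_0) = 0$ on $B_r(x_0)$ with $w = u$ on $\partial B_r(x_0)$, furnished by the hypothesized $W^{2,p_1}$ interior estimate. Then $u - w$ satisfies, in the viscosity sense, the extremal inequality
\[
\mathcal{M}^{-}_{\lambda,\Lambda}(D^2(u-w)) \leq |f(x)| + \beta(x, x_0)\,\|D^2 w\|
\]
(and the symmetric bound from above). H\"older's inequality with $1/p_1 + 1/p_2 = 1/p_0$ gives
\[
\bigl\|\,\beta(\cdot,x_0)\,\|D^2 w\|\,\bigr\|_{L^{p_0}(B_r(x_0))} \leq \|\beta(\cdot,x_0)\|_{L^{p_2}(B_r(x_0))}\,\|D^2 w\|_{L^{p_1}(B_r(x_0))},
\]
and \eqref{eq3.1}, after rescaling to unit size, controls the second factor by $c_e\|u\|_{L^\infty(B_r(x_0))}$. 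Applying Fok's ABP at exponent $p_0$ to $u-w$ then yields
\[
\|u-w\|_{L^\infty(B_{r/2}(x_0))} \leq C\, r^{2-n/p_0}\bigl(\|f\|_{L^{p_0}(B_r)} + \beta_0\, c_e\,\|u\|_{L^\infty(B_r)}\bigr),
\]
which, after choosing $\beta_0$ small, says $u$ is close in $L^\infty$ to a function lying in $W^{2,p_1}$.

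Next I would iterate this lemma in the standard Caffarelli fashion to build, at each point $x_0$ of a suitable ``good'' set, a sequence of quadratic polynomials $\{P_k\}$ with Hessians uniformly bounded by $C(\|u\|_{L^\infty(B_1)}+\|f\|_{L^p(B_1)})$ and with $\sup_{B_{2^{-k}}(x_0)}|u-P_k|\leq C\,2^{-(2+\sigma)k}$ for some $\sigma>0$. A Calder\'on-Zygmund dyadic cube decomposition of the complement of the good set, combined with Fok's $L^{\varepsilon}$ estimate and Harnack inequality at exponent $p_0$ in place of the classical ones at exponent $n$, then produces the distributional decay
\[
\bigl|\{x\in B_{1/2}:\,M(|D^2 u|^{p_0})(x) > t\}\bigr| \leq C\, t^{-p/p_0}\bigl(\|u\|_{L^\infty(B_1)} + \|f\|_{L^p(B_1)}\bigr)^{p},
\]
where $M$ denotes the Hardy-Littlewood maximal operator. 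Since $p>p_0$, integrating this distributional bound against $t^{p/p_0-1}\,dt$ yields $D^2 u \in L^p(B_{1/2})$ with the stated estimate.

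The main obstacle is making the approximation lemma quantitatively sharp enough to feed into the Calder\'on-Zygmund iteration at the borderline integrability $p_0<n$. Two checks have to close simultaneously: the scaling exponent $2-n/p_0$ must be strictly positive (guaranteed by $\varepsilon_0$ small, hence $p_0>n/2$), and the H\"older split of the product $\beta\,\|D^2 w\|$ must decouple exactly into the $L^{p_2}$-smallness of $\beta$ (hypothesis of the theorem) and the $L^{p_1}$-bound on $D^2 w$ (hypothesis on $F(\cdot,x_0)$), which is precisely the role of $1/p_1+1/p_2=1/p_0$. Once these two pieces are verified, the remainder of the proof of Theorem \ref{thh1.2} transfers essentially verbatim, with Fok's results supplying the only ingredient that is not already covered by the classical $W^{2,p}$ theory.
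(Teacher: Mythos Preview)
Your proposal has a genuine gap at the iteration step. You write that you would ``build, at each point $x_0$ of a suitable good set, a sequence of quadratic polynomials $\{P_k\}$ with Hessians uniformly bounded \dots and with $\sup_{B_{2^{-k}}(x_0)}|u-P_k|\leq C\,2^{-(2+\sigma)k}$.'' That is the Schauder-type iteration from Caffarelli's original $C^{1,1}$ setting: at each scale one subtracts from $u$ the second-order Taylor polynomial of the approximating solution $w$, and this requires $D^2w$ to be pointwise bounded at the center. Under the weakened hypothesis here, $w$ is only in $W^{2,p_1}$, so $D^2w$ can blow up on a set of positive measure and no single polynomial with controlled opening is available. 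The step simply does not go through, and this is exactly the difficulty that separates Theorem~\ref{th1.2} from Proposition~\ref{prop1}.

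The paper's remedy is to abandon pointwise polynomial approximation and work entirely at the level of the touching sets $A_t(u,\Omega)$. The key extra ingredient you are missing is Lemma~\ref{le3.4}, which converts the interior bound $\|h\|_{W^{2,p_1}(B_{6\sqrt n})}\leq C$ into the decay $|A_t(h,\Omega)\cap Q_1|\leq C t^{-p_1}$. One then combines this with the $S$-class decay $|A_t(u-h,\Omega)\cap Q_1|\leq C\varepsilon^{\gamma\mu}t^{-\mu}$ coming from Lemma~\ref{le3.5} (your approximation lemma does feed into this) to get $|A_{2t}(u,\Omega)\cap Q_1|\leq C\varepsilon^{\gamma\mu}t^{-\mu}+Ct^{-p_1}$, and a Calder\'on--Zygmund amplification of this measure estimate (Lemmas~\ref{le3.7}--\ref{le3.9}) yields $\sum_k M^{pk}|A_{M^k}|<\infty$, hence $D^2u\in L^p$. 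A secondary issue: in your approximation lemma you apply ABP on $B_r$ using $\|D^2w\|_{L^{p_1}(B_r)}$, but the interior estimate only controls $D^2w$ on $B_{r/2}$; the paper handles this boundary layer in Lemma~\ref{le3.6} via a H\"older-continuity argument and an extra parameter $\delta$.
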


 A priori $W^{2,p}$ estimates for $n-\varepsilon<p$ were obtained by Escauriaza \cite{Es} in 1993 where $\varepsilon>0$ can be traced back to \cite{C-F}. In 1996, Fok in \cite{Fo} generalized the Aleksandrov-Bakelman-Pucci maximum principle of (\ref{eq1.1}) to allowing $f\in L^p$ for $n-\varepsilon_0< p$. At almost the same time, Caffarelli et al. (see Theorem B.1 in \cite{C-C-K-S}) obtained analogous $W^{2,p}$ estimates for strong solutions instead of classical solutions in \cite{Es}. This allowed authors to treat equations in measurable functions space. Furthermore, \cite{C-C-K-S} generalized the equation to the full form, i.e., $F(D^2u,Du,u,x)=f$, where a structure condition was needed. In 1997, \'{S}wiech gave the $W^{2,p}$ estimates for $W^{2,p}-$viscostiy solutions for $p\geq n-\varepsilon_0$ (see Theorem 3.1 \cite{Sw}), but our method is different from his. Recently, Winter \cite{Wi} obtained the global estimates analogous to interior estimates in \cite{Sw}. We also note that Wang \cite{Wa} gave the $W^{2,p}$ estimates of fully nonlinear parabolic equations.

 It should be noted that all mentioned papers above depend on the $C^{1,1}$ estimates for equations with constant coefficients whereas we only need a weaker $W^{2,p_1}$ interior estimates.

This paper is organized as follows: We prepare some preliminaries in Section 2. Theorem \ref{th1.2} will be proved in Section 3 and it is clear that Theorem \ref{thh1.2} is an easy consequence of Theorem \ref{th1.2}. The main technique of the proof is borrowed from \cite{C-C}. That is, we use polynomials of degree 2 to touch solutions and estimate the decay of the measure of the set on which touching polynomials have large aperture. We use the following notations in this paper, many of which are standard.

\begin{notation}\label{}
1. $|x|$: the Euclidean norm of $x\in R^n$.\\
2. $B(x,r)$ or $B_r(x)$: a ball of radius $r$ centered at $x$ in $R^n$ and $B_r$: $=B(0,r)$.\\
3. $Q(x,r)$ or $Q_r(x)$: a cube of side-length $r$ centered at $x$ in $R^n$ and $Q_r$: $=Q(0,r)$.\\
4. $|A|$: the Lebesgue measure of a measurable set $A\subset R^n$.\\
5. $S(n)$: the set of $n\times n$ symmetric matrices.\\
6. $\|N\|$: the spectral radius of $N\in S(n)$.\\
7. $N^{+}$, $N^-$: the positive and the negative parts of $N\in S(n)$.\\
8. $\mbox{tr}(N)$: the trace of $N\in S(n)$.\\
9. $m(f)(x_0)=\sup_{r>0}\frac{1}{|B_r(x_0)|}\int_{B_r(x_0)}|f(x)|dx$, the maximal function of $f$.
\end{notation}

\section{Preliminaries}
For the reader's convenience, we collect some preliminaries related to fully nonlinear elliptic equations and viscosity solutions. In addition, we present here some results which will be used in the next section.

First, we introduce the Pucci extremal operators (see Section 2.2\cite{C-C} or \cite{Pu}). Let $0<\lambda\leq\Lambda$ and $N\in S(n)$ be given. We define
\begin{equation*}
 M^+(N)=M^+(N,\lambda,\Lambda)=\Lambda \mbox{tr}(N^+)-\lambda \mbox{tr}(N^-)
\end{equation*}
and
\begin{equation*}
M^-(N)=M^-(N,\lambda,\Lambda)=\lambda \mbox{tr}(N^+)-\Lambda \mbox{tr}(N^-).
\end{equation*}

Let $\underline S^q(\lambda,\Lambda,f)$ denote the set of $W^{2,q}-$viscosity subsolutions of $M^+\left(D^2u,\lambda,\Lambda\right)=f(x).$ Similarly, let $\bar S^q(\lambda,\Lambda,f)$ denote the set of $W^{2,q}-$viscosity supersolutions of $M^-\left(D^2u,\lambda,\Lambda\right)=f(x).$ We also define
\begin{equation*}
   \begin{aligned}
 S^q(\lambda,\Lambda,f)=\underline S^q(\lambda,\Lambda,f)\cap \bar S^q(\lambda,\Lambda,f).
 \end{aligned}
\end{equation*}

Next, since the idea of the proof of Theorem \ref{th1.2} is to use polynomials of degree 2 to touch the solutions, we present here some preliminaries about this technique.

\begin{definition}\label{de2.7}
A paraboloid is a polynomial in $(x_1,...,x_n)$ of degree 2. We call $P$ a paraboloid of opening $M$ if
\begin{equation*}
    P=l_0+l(x)\pm\frac{M}{2}|x|^2,
\end{equation*}
where $M$ is a positive constant, $l_0$ a constant and $l$ a linear function. $P$ is convex if we have $+$ in above equation and concave when we have $-$.
\end{definition}

\begin{definition}\label{de2.8}
Given two continuous functions $u$ and $v$ defined in an open set $\Omega$ and a point $x_0\in\Omega$, we say that $v$ touches $u$ by above at $x_0$ in $\Omega$ whenever
\begin{gather*}
u(x)\leq v(x),~~\forall~x\in \Omega,\\
u(x_0)=v(x_0).
\end{gather*}
We also have the analogous definition of touching by below.
\end{definition}

\begin{definition}\label{de2.9}
Let $\Omega\subset R^n$ be bounded and $u\in C(\Omega)$. For $x\in \Omega$, we define
\begin{equation*}
\begin{split}
\overline\Theta (u,\Omega)(x)=&\inf\{M: \text {there exists a convex paraboloid of opening $M$ }\\
&\text{that touches $u$ by above at $x$ in $\Omega$} \}.
\end{split}
\end{equation*}
Analogously, we define
\begin{equation*}
\begin{split}
\underline\Theta (u,\Omega)(x)=&\inf\{M: \text {there exists a concave paraboloid of opening $M$ }\\
&\text{that touches $u$ by below at $x$ in $\Omega$}\}.
\end{split}
\end{equation*}
Finally, we define $ \Theta (u,\Omega)(x)=\max\{\overline\Theta (u,\Omega)(x), \underline{\Theta} (u,\Omega)(x)\}$.
\end{definition}

$\Theta (u,\Omega)$ can be used to represent the second derivatives of $u$. Here, we have the following two results. The first was proved by Caffarelli and Cabr\'{e} (see Proposition 1.1\cite{C-C}):
\begin{proposition}\label{pr2.10}
Let $1<q\leq +\infty$ and $u$ be a continuous function in $\Omega$. Let $\varepsilon$ be a positive constant and define
\begin{equation*}
\Theta (u,\varepsilon)(x):=\Theta (u,\Omega\cap B_\varepsilon(x))(x),~~~~x\in\Omega.
\end{equation*}
Assume that $\Theta (u,\varepsilon)\in L^q(\Omega)$. Then $D^2u\in L^q(\Omega)$ and
\begin{equation*}
\|D^2u\|_{L^q(\Omega)}\leq 2n\|\Theta (u,\varepsilon)\|_{L^q(\Omega)}.
\end{equation*}
\end{proposition}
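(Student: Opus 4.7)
The plan is to convert the pointwise geometric control $\Theta(u,\varepsilon)$ into an $L^q$ bound on the distributional Hessian of $u$ through a pointwise quadratic expansion of $u$, followed by a mollification-plus-weak-compactness argument.

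First I would extract a pointwise ``$C^{1,1}$-type'' estimate from Definition \ref{de2.9}. At any $x_0\in\Omega$ where $M:=\Theta(u,\varepsilon)(x_0)<\infty$, there exist a convex paraboloid $P^+$ and a concave paraboloid $P^-$, both of opening $M$, touching $u$ at $x_0$ from above and below respectively in $\Omega\cap B_\varepsilon(x_0)$. Since $P^+-P^-\ge 0$ attains its minimum $0$ at $x_0$, its gradient vanishes there, so the affine parts of $P^\pm$ coincide at $x_0$; calling the common tangent $L_{x_0}$, one reads off
\[
|u(y)-L_{x_0}(y)|\le\tfrac{M}{2}|y-x_0|^2\qquad\forall\,y\in\Omega\cap B_\varepsilon(x_0).
\]
In particular, the symmetric second difference obeys $|u(x_0+h)+u(x_0-h)-2u(x_0)|\le M|h|^2$ whenever $|h|<\varepsilon$ and $x_0\pm h\in\Omega$.

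Next I would mollify. Pick an even $\eta\in C_c^\infty(B_1)$ with $\int\eta=1$, set $\eta_\delta(x)=\delta^{-n}\eta(x/\delta)$ and $u_\delta=u\ast\eta_\delta$ on $\Omega_\delta=\{x\in\Omega:\mathrm{dist}(x,\partial\Omega)>\delta\}$. Using the evenness of $\partial_{ij}\eta_\delta$ together with $\int\partial_{ij}\eta_\delta=0$, each mixed partial can be symmetrized as
\[
\partial_{ij}u_\delta(x_0)=\tfrac{1}{2}\int\bigl[u(x_0+y)+u(x_0-y)-2u(x_0)\bigr]\partial_{ij}\eta_\delta(y)\,dy.
\]
For $\delta<\varepsilon$ and $x_0\in\Omega_\delta$, inserting the second-difference bound and rescaling $y\mapsto\delta z$ yields $|\partial_{ij}u_\delta(x_0)|\le C(\eta)\,\Theta(u,\varepsilon)(x_0)$, with a constant independent of $\delta$. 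Finally I would pass to the limit: $\Theta(u,\varepsilon)\in L^q(\Omega)$ with the pointwise bound makes $\{D^2u_\delta\}$ uniformly bounded in $L^q_{\mathrm{loc}}(\Omega)$, while $u_\delta\to u$ in $L^1_{\mathrm{loc}}$ identifies the weak limit of $D^2u_\delta$ (along a subsequence; weak-$\ast$ if $q=\infty$) with the distributional Hessian $D^2u$, and weak lower semicontinuity of the $L^q$ norm delivers the stated inequality.

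The main obstacle I foresee is pinning down the precise prefactor $2n$, since the mollification scheme above yields only a constant $C(n,\eta)$ depending on the auxiliary kernel. To sharpen the constant I would bypass mollification and argue by an Aleksandrov-type twice-differentiability result: at almost every $x_0$ in the set where $\Theta(u,\varepsilon)$ is finite, the two-sided paraboloidal control allows one to improve $L_{x_0}$ to a genuine second-order Taylor polynomial of $u$ at $x_0$, and the symmetric matrix appearing there must have spectral norm at most $\Theta(u,\varepsilon)(x_0)$ by the sandwich. The prefactor $2n$ then emerges from estimating the entries of $D^2u$ from its spectral norm together with the polarization identity used to reconstruct the off-diagonal entries.
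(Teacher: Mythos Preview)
The paper does not prove this proposition at all; it merely quotes it as Proposition~1.1 of Caffarelli--Cabr\'{e} \cite{C-C} and moves on. There is therefore no in-paper argument to compare your attempt against.

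For what it is worth, your outline is essentially the skeleton of the argument in \cite{C-C}: from the two-sided paraboloidal touching at $x_0$ one extracts the pointwise second-difference bound $|u(x_0+h)+u(x_0-h)-2u(x_0)|\le \Theta(u,\varepsilon)(x_0)\,|h|^2$, and then passes to the Hessian. Caffarelli--Cabr\'{e} carry out that passage directly with second difference quotients (dominated convergence plus the distributional identification), rather than through your mollification detour; the difference-quotient route is closer to what you describe in your final paragraph and is what makes the constant $2n$ trackable. Your mollification variant is also correct but, as you already flag, produces only a kernel-dependent constant $C(n,\eta)$. One small technicality: since $\Theta$ is defined as an infimum, you should take touching paraboloids of opening $M+\epsilon$ for arbitrary $\epsilon>0$ rather than $M$ itself, and let $\epsilon\downarrow 0$ at the end.
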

In fact, the converse is also true:
\begin{lemma}\label{le3.4}
Let $\Omega$ be a $C^{1,1}$ bounded domain and $n/2<q< +\infty$. If $u\in W^{2,q}(\Omega)$, then
\begin{equation}\label{eq3.4}
   \|\Theta (u,\Omega)\|_{L^q(\Omega)}\leq C\|u\|_{W^{2,q}(\Omega)},
\end{equation}
where $C$ depends only on $n$, $q$ and $\Omega$.
\end{lemma}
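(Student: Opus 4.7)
The strategy is to establish the pointwise bound
\[
\Theta(u,\Omega)(x)\le C\bigl[M(|D^{2}\tilde u|^{s})(x)\bigr]^{1/s}
\]
for some auxiliary exponent $s\in(n/2,q)$, where $\tilde u$ is a $W^{2,q}$-extension of $u$ to $\mathbb R^{n}$ and $M$ denotes the Hardy--Littlewood maximal operator. Once this is in hand, the condition $q/s>1$ together with the $L^{q/s}$-boundedness of $M$ will convert the pointwise control into the desired $L^{q}(\Omega)$-estimate.

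\textbf{Step 1 (Extension and pointwise values).} The $C^{1,1}$ regularity of $\partial\Omega$ supplies a bounded extension operator, yielding $\tilde u\in W^{2,q}(\mathbb R^{n})$, compactly supported, with $\|\tilde u\|_{W^{2,q}(\mathbb R^{n})}\le C(\Omega)\|u\|_{W^{2,q}(\Omega)}$. Because $q>n/2$, Sobolev embedding gives a continuous representative of $\tilde u$, and for a.e.\ $x\in\Omega$ the Lebesgue values $\tilde u(x)$ and $\nabla\tilde u(x)$ are well defined.

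\textbf{Step 2 (Pointwise estimate).} Fix $s\in(n/2,q)$ and a Lebesgue point $x$. For each $r>0$ the second-order Sobolev--Poincar\'e inequality on $B_{r}(x)$ produces an affine function $L_{r,x}$ with
\[
\|\tilde u-L_{r,x}\|_{L^{\infty}(B_{r}(x))}\le Cr^{2-n/s}\|D^{2}\tilde u\|_{L^{s}(B_{r}(x))}.
\]
Identifying $L_{r,x}$ with the Lebesgue-value polynomial $y\mapsto\tilde u(x)+\nabla\tilde u(x)\cdot(y-x)$ up to an error absorbable into the right-hand side, setting $r=|y-x|$ and taking the sup over $y\in\Omega$ gives
\[
\sup_{y\in\Omega,\,y\ne x}\frac{|\tilde u(y)-\tilde u(x)-\nabla\tilde u(x)\cdot(y-x)|}{|y-x|^{2}/2}\le C\bigl[M(|D^{2}\tilde u|^{s})(x)\bigr]^{1/s}.
\]
Thus the convex paraboloid $P(y)=u(x)+\nabla u(x)\cdot(y-x)+\tfrac{M_{x}}{2}|y-x|^{2}$ with $M_{x}=C[M(|D^{2}\tilde u|^{s})(x)]^{1/s}$ touches $u$ from above at $x$ in $\Omega$, so $\overline{\Theta}(u,\Omega)(x)\le M_{x}$; the symmetric construction handles $\underline{\Theta}$.

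\textbf{Step 3 ($L^{q}$ integration).} Since $q/s>1$, the Hardy--Littlewood maximal theorem yields
\[
\|\Theta(u,\Omega)\|_{L^{q}(\Omega)}^{q}\le C\int_{\Omega}\bigl[M(|D^{2}\tilde u|^{s})\bigr]^{q/s}\,dx\le C\|D^{2}\tilde u\|_{L^{q}(\mathbb R^{n})}^{q}\le C\|u\|_{W^{2,q}(\Omega)}^{q}.
\]

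\textbf{Main obstacle.} The delicate point is Step~2: the Sobolev--Poincar\'e inequality naturally produces the best affine approximation on $B_{r}(x)$, but building a single touching paraboloid requires using the pointwise data $u(x),\nabla u(x)$ for every $y\in\Omega$ simultaneously. When $s\le n$, $\nabla\tilde u$ need not be continuous, so comparing $\nabla L_{r,x}$ to $\nabla\tilde u(x)$ with an $O(r^{2-n/s}\|D^{2}\tilde u\|_{L^{s}})$ error is not immediate. The cleanest resolution is to prove the estimate first for smooth $u_{\varepsilon}=\tilde u*\rho_{\varepsilon}$ (where the classical second-order Taylor expansion lets the argument above go through unconditionally) and then pass to the limit, using Proposition~\ref{pr2.10} to control $\Theta(u-u_{\varepsilon},\Omega)$ via $\|D^{2}(u-u_{\varepsilon})\|_{L^{q}}\to 0$.
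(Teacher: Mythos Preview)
Your overall architecture---pointwise control of $\Theta(u,\Omega)$ by a maximal function of $|D^{2}\tilde u|$, followed by the Hardy--Littlewood maximal theorem---is exactly the route the paper takes. The difference lies in how Step~2 is executed, and here your proposal has a genuine gap.

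You correctly flag that identifying the Sobolev--Poincar\'e affine $L_{r,x}$ with the first-order Taylor polynomial $y\mapsto \tilde u(x)+\nabla\tilde u(x)\cdot(y-x)$ is the crux, but your proposed fix is circular: you appeal to Proposition~\ref{pr2.10} to bound $\Theta(u-u_\varepsilon,\Omega)$ by $\|D^{2}(u-u_\varepsilon)\|_{L^q}$, yet Proposition~\ref{pr2.10} runs the \emph{opposite} way---it bounds $\|D^{2}v\|_{L^q}$ by $\|\Theta(v,\varepsilon)\|_{L^q}$. Bounding $\Theta$ by $D^{2}$ is precisely the content of the lemma you are trying to prove. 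Even setting this aside, a uniform bound $\|\Theta(u_\varepsilon,\Omega)\|_{L^q}\le C$ does not automatically pass to the limit: $\Theta$ is not linear, and you would still need a lower-semicontinuity argument for $\Theta$ under uniform convergence, which you do not supply.

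The paper avoids mollification entirely. It subtracts the full \emph{second}-order Taylor polynomial at a Lebesgue point $x_0$, sets $h=\tilde u-\text{(Taylor)}$, and then chains Morrey's inequality with Poincar\'e's inequality to bound $|h(y)|$ by $C|y-x_0|^2$ times quantities built from $m(|D^2\tilde u|^{q_1})(x_0)^{1/q_1}$ and $|D^2\tilde u(x_0)|$. The step that replaces your ``identification'' is a separate estimate on the gradient average $(Dh)_{(x_0,r)}$: the paper cites Appendix~C of \cite{C-C-K-S} to get $|(Dh)_{(x_0,r)}|\le Cr\bigl(m(|D^2\tilde u|)(x_0)+|D^2\tilde u(x_0)|\bigr)$, which is exactly the missing link between the ball-averaged affine function and the pointwise Taylor data. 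Inserting an argument of this type in your Step~2---rather than the mollification detour---would close the gap.
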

\begin{proof}
Let $\tilde u$ be the extension of $u$ to $R^n$. Take $q_1=(q+n/2)/2$ and $n/2<q_0<\min(n,q_1)$. Let $x_0\in\Omega$ be a Lebesgue point of $\tilde u$, $D\tilde{u}$ and $D^2\tilde{u}$. We define
\begin{equation*}\label{}
    h(x)=\tilde{u}(x)-\left(\tilde{u}(x_0)+D\tilde{u}(x_0)(x-x_0)+\frac{1}{2}(x-x_0)^{T}D^2\tilde{u}(x_0)(x-x_0)\right),~x\in R^n.
\end{equation*}
Let $r=|x-x_0|$, $q_0^*=nq_0/(n-q_0)$ and $(Dh)_{(x_0,r)}=\frac{1}{|B_r(x_0)|}\int_{B_r(x_0)}Dh$. Then by Morrey's inequality, we have
\begin{equation}\label{n3191}
\begin{aligned}
&|h(x)|=|h(x)-h(x_0)|\leq Cr\left(\frac{1}{|B_r(x_0)|}\int_{B_r(x_0)}|Dh|^{q_0^*}\right)^{1/q_0^*}\\
&\leq Cr\left(\frac{1}{|B_r(x_0)|}\int_{B_r(x_0)}|Dh(y)-(Dh)_{(x_0,r)}|^{q_0^*}\right)^{1/q_0^*}+Cr\cdot (Dh)_{(x_0,r)}.
\end{aligned}
\end{equation}
From Poincare's inequality, we have
\begin{equation}\label{n3192}
\begin{aligned}
&\left(\frac{1}{|B_r(x_0)|}\int_{B_r(x_0)}|Dh(y)-(Dh)_{(x_0,r)}|^{q_0^*}\right)^{1/q_0^*}\\
&\leq Cr\left(\frac{1}{|B_r(x_0)|}\int_{B_r(x_0)}|D^2h|^{q_0}dy\right)^{1/q_0}\\
&= Cr\left(\frac{1}{|B_r(x_0)|}\int_{B_r(x_0)}|D^2\tilde{u}(y)-D^2\tilde{u}(x_0)|^{q_0}dy\right)^{1/q_0}\\
&\leq Cr\left(\frac{1}{|B_r(x_0)|}\int_{B_r(x_0)}|D^2\tilde{u}(y)-D^2\tilde{u}(x_0)|^{q_1}dy\right)^{1/q_1}\\
&\leq Cr\left(\left(m(|D^2\tilde{u}|^{q_1})(x_0)\right)^{1/q_1}+|D^2\tilde{u}(x_0)|\right),
\end{aligned}
\end{equation}
where $m(g)$ denotes the maximal function of $g$. On the other hand, from Appendix C in \cite{C-C-K-S}, we have
\begin{equation}\label{n3193}
\begin{aligned}
|(Dh)_{(x_0,r)}|&\leq Cr\sup_{0<s\leq r}\frac{1}{|B_s(x_0)|}\int_{B_s(x_0)}|D^2\tilde{u}(y)-D^2\tilde{u}(x_0)|dy\\
&\leq Cr\left(m(|D^2\tilde{u}|)(x_0)+|D^2\tilde{u}(x_0)|\right).
\end{aligned}
\end{equation}

Hence, from (\ref{n3191}), (\ref{n3192}) and (\ref{n3193}), we have
\begin{equation*}
|h(x)|\leq Cr^2\left(\left(m(|D^2\tilde{u}|^{q_1})(x_0)\right)^{1/q_1}+m(|D^2\tilde{u}|)(x_0)+|D^2\tilde{u}(x_0)|\right).
\end{equation*}
Note that $\tilde u=u$ in $\Omega$, we have
\begin{equation*}
\begin{aligned}
u(x)\leq& u(x_0)+Du(x_0)(x-x_0)+\\
&\frac{C}{2}\left(\left(m(|D^2\tilde{u}|^{q_1})(x_0)\right)^{1/q_1}+m(|D^2\tilde{u}|)(x_0)+|D^2u(x_0)|\right)|x-x_0|^2,~~~~\forall~x\in \Omega.
\end{aligned}
\end{equation*}
Hence, for $a.e.$ $x_0\in\Omega$,
\begin{equation*}
\overline\Theta(u,\Omega)(x_0)\leq \frac{C}{2}\left(\left(m(|D^2\tilde{u}|^{q_1})(x_0)\right)^{1/q_1}+m(|D^2\tilde{u}|)(x_0)+|D^2u(x_0)|\right).
\end{equation*}
We have the corresponding inequality for $\underline \Theta(u,\Omega)$. Therefore,
\begin{equation*}
\begin{aligned}
\|\Theta (u,\Omega)\|_{L^q(\Omega)}&\leq C\left(\|m(|D^2\tilde{u}|^{q_1})\|_{L^{q/q_1}(R^n)}^{1/q_1}+
 \|m(|D^2\tilde{u}|)\|_{L^{q}(R^n)}+\|D^2u\|_{L^q(\Omega)}\right)\\
 &\leq C\|u\|_{W^{2,q}(\Omega)}.
\end{aligned}
\end{equation*}
\end{proof}

From the viewpoint of which polynomials touch a solution, we have the following definition.
\begin{definition}\label{de2.11}
Let  $\Omega\subset R^n$ be bounded and $u\in C(\Omega)$. For any $ M>0 $, we define
\begin{equation*}
\begin{split}
  \overline{G}_{M}(u,\Omega)=&\{x\in \Omega: \text{there exists a convex paraboloid of opening $M$}\\
  &\text{that touches $u$ by above at $x$ in $\Omega$}\}
\end{split}
\end{equation*}
and $\overline{A}_{M}(u,\Omega)=\Omega\backslash \overline{G}_{M}(u,\Omega).$ Analogously, we define
\begin{equation*}
\begin{split}
  \underline{G}_{M}(u,\Omega)=&\{x\in \Omega: \text{there exists a concave paraboloid of opening $M$}\\
  &\text{that touches $u$ by below at $x$ in $\Omega$}\}
\end{split}
\end{equation*}
and $\underline{A}_{M}(u,\Omega)=\Omega\backslash \underline{G}_{M}(u,\Omega).$ We finally define $G_M(u,\Omega)=\underline{G}_{M}(u,\Omega)\cap\overline{G}_{M}(u,\Omega)$ and $A_{M}(u,\Omega)=\Omega\backslash {G}_{M}(u,\Omega)$.
\end{definition}

\begin{remark}\label{re_330}
From above definitions, it is clear that
\begin{equation}\label{eq2.1}
\left\{\Theta(u,\Omega)> t\right\}\subset  A_t(u,\Omega)\subset\left\{\Theta(u,\Omega)\geq t\right\},~~~~\forall~t>0.
\end{equation}
\end{remark}

We also use the following Calder\'{o}n-Zygmund cube decomposition. Let $Q_1$ be the unit cube. We split it into $2^n$ cubes of half side. We do the same splitting with each one of these $2^n$ cubes and we iterate this process. The cubes obtained in this way are called dyadic cubes. If $Q$ is a dyadic cube different from $Q_1$, we say that $\tilde Q$ is the predecessor of $Q$ if $Q$ is one of the $2^n$ cubes obtained from dividing $\tilde Q$.
\begin{proposition}\label{pr2.13}
Let $A\subset B\subset Q_1$ be measurable sets and $0<\delta<1$ such that\\
(i) $|A|\leq \delta$,\\
(ii) If $Q$ is a dyadic cube such that $|A\cap Q|>\delta |Q|$, then $\tilde Q\subset B$.

Then $|A|\leq\delta|B|$.
\end{proposition}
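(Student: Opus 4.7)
The plan is to run a dyadic Calderón--Zygmund stopping-time argument and then disjointify the predecessors. Consider the family $\mathcal{F}$ of all dyadic subcubes $Q$ of $Q_1$ satisfying $|A\cap Q|>\delta|Q|$. Since $|Q_1|=1$ and $|A|\le\delta$ by hypothesis (i), the cube $Q_1$ itself does not belong to $\mathcal{F}$, so every element of $\mathcal{F}$ has a well-defined predecessor inside $Q_1$.

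Next I would extract the maximal cubes $\{Q_k\}$ of $\mathcal{F}$ with respect to inclusion. Dyadic cubes are pairwise either disjoint or nested, hence the $Q_k$'s are disjoint, and maximality gives $|A\cap\tilde Q_k|\le\delta|\tilde Q_k|$ for each $k$. The Lebesgue differentiation theorem applied to $\chi_A$ shows that a.e.\ $x\in A$ has dyadic density $1$ at arbitrarily fine scales, so $x$ lies in some cube of $\mathcal{F}$ and therefore in some $Q_k$; thus $A\subset\bigcup_k Q_k$ up to a null set.

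The essential difficulty is that the predecessors $\{\tilde Q_k\}$ need not be disjoint: two distinct $Q_k$'s may sit inside strictly nested ancestors. To handle this, I would pass to the maximal elements $\{\tilde Q^{(i)}\}$ of the collection $\{\tilde Q_k\}$ with respect to inclusion. These are dyadic, hence pairwise disjoint, and every $\tilde Q_k$ is contained in some $\tilde Q^{(i)}$. Each $\tilde Q^{(i)}$ is itself some $\tilde Q_k$, so still satisfies $|A\cap\tilde Q^{(i)}|\le\delta|\tilde Q^{(i)}|$, and by hypothesis (ii) applied to the corresponding $Q_k\in\mathcal{F}$ we have $\tilde Q^{(i)}\subset B$.

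Putting it together, $A\subset\bigcup_k Q_k\subset\bigcup_k \tilde Q_k=\bigcup_i\tilde Q^{(i)}\subset B$ up to null sets, so by disjointness of the $\tilde Q^{(i)}$
\[
|A|=\sum_i|A\cap\tilde Q^{(i)}|\le\delta\sum_i|\tilde Q^{(i)}|=\delta\Bigl|\bigcup_i\tilde Q^{(i)}\Bigr|\le\delta|B|.
\]
The main point requiring care is precisely this disjointification: summing over the $\tilde Q_k$'s directly costs a factor of $2^n$, which is absent from the stated conclusion.
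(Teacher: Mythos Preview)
Your proof is correct; the paper itself does not prove this proposition but simply cites Lemma~4.2 of Caffarelli--Cabr\'e, and your Calder\'on--Zygmund stopping-time argument with the disjointification of predecessors is exactly the standard proof recorded there. The care you take in passing from the maximal cubes $Q_k$ to the maximal predecessors $\tilde Q^{(i)}$ is the right way to avoid the spurious $2^n$ factor.
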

\begin{proof}
See Lemma 4.2\cite{C-C}.
\end{proof}

We also need the following proposition whose proof is left to the reader.
\begin{proposition}\label{pr2.14}
Let $g$ be a nonnegative and measurable function defined in $\Omega$ and $\mu_g$ be its distribution function, i.e.,
\begin{equation*}
\mu_g(t)=\left|\left\{x\in\Omega: g(x)>t\right\}\right|,~~~~t>0.
\end{equation*}
Let $\eta>0$ and $M>1$ be constants. Then, for $0<q<+\infty$,
\begin{equation}\label{eq2.2}
g\in L^q(\Omega)\Longleftrightarrow \sum_{k\geq 1}M^{qk}\mu_g(\eta M^k)=:S<+\infty
\end{equation}
and
\begin{equation*}
C^{-1}S\leq\|g\|^{q}_{L^q(\Omega)}\leq C\left(|\Omega|+S\right),
\end{equation*}
where $C>0$ is a constant depending only on $\eta$, $M$ and $q$.
\end{proposition}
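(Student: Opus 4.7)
The plan is to use the layer-cake identity
\begin{equation*}
\|g\|_{L^q(\Omega)}^{q}=q\int_{0}^{\infty}t^{q-1}\mu_g(t)\,dt
\end{equation*}
and compare this integral with the dyadic-type sum $S$ by decomposing the $t$-axis into the intervals $[0,\eta)$ and $[\eta M^{k},\eta M^{k+1})$ for $k\geq 0$. Since $M>1$, the weight $t^{q-1}$ is comparable to $(\eta M^{k})^{q-1}$ throughout each such interval, so the integral over the $k$-th interval is comparable to $\eta^{q}M^{qk}$ times $\mu_g$ evaluated at an endpoint; this is the bridge between the continuous $L^{q}$ quantity and the discrete series defining $S$.

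For the upper bound, I would use monotonicity of $\mu_g$ to estimate
\begin{equation*}
\int_{\eta M^{k}}^{\eta M^{k+1}}t^{q-1}\mu_g(t)\,dt\leq\mu_g(\eta M^{k})\cdot\frac{\eta^{q}(M^{q}-1)M^{qk}}{q},
\end{equation*}
together with the trivial estimate $\int_{0}^{\eta}t^{q-1}\mu_g(t)\,dt\leq\eta^{q}|\Omega|/q$ and $\mu_g(\eta)\leq|\Omega|$ to absorb the stray $k=0$ term. Summing in $k\geq 0$ gives $\|g\|_{L^q(\Omega)}^{q}\leq C(|\Omega|+S)$, with $C$ depending only on $\eta$, $M$, $q$.

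For the lower bound, I would use monotonicity in the reverse direction, namely
\begin{equation*}
\int_{\eta M^{k}}^{\eta M^{k+1}}t^{q-1}\mu_g(t)\,dt\geq\mu_g(\eta M^{k+1})\cdot\frac{\eta^{q}(M^{q}-1)M^{qk}}{q},
\end{equation*}
and then shift the index $k\mapsto k-1$ to recognize the sum $\sum_{k\geq 1}M^{qk}\mu_g(\eta M^{k})=S$. This yields $C^{-1}S\leq\|g\|_{L^q(\Omega)}^{q}$. Together with the upper bound this gives both the equivalence (\ref{eq2.2}) and the two-sided quantitative comparison.

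There is essentially no obstacle: the only thing to keep track of carefully is the off-by-one index shift separating $\mu_g(\eta M^{k})$ from $\mu_g(\eta M^{k+1})$, which is the reason the sum in the statement starts at $k=1$ rather than $k=0$ and the reason the $|\Omega|$ term (which absorbs the contribution of $\mu_g(\eta)$ and of $t\in[0,\eta)$) appears only in the upper estimate.
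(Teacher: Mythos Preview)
Your argument is correct. The layer-cake identity together with the decomposition of $[0,\infty)$ into $[0,\eta)\cup\bigcup_{k\geq 0}[\eta M^{k},\eta M^{k+1})$ and the monotonicity of $\mu_g$ gives exactly the two inequalities you state, and the index shift $k\mapsto k-1$ in the lower bound is handled cleanly.

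As for comparison with the paper: the paper explicitly leaves the proof of this proposition to the reader, so there is no argument to compare against. Your approach is the standard one and is precisely what the authors have in mind.
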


\section{$W^{2,p}$ interior estimates}
In this section, we will prove Theorem \ref{th1.2}. By scaling and covering arguments, we only need to prove the following:
\begin{theorem}\label{th3.3}
Let $p, p_0,p_1$ and $p_2$ be the same as in Theorem \ref{th1.2}. Let $u$ be a $W^{2,p}-$viscosity solution of
\begin{equation*}\label{1}
    F(D^2u,x)=f(x)~~~~\mbox{in}~~B_{8\sqrt{n}}.
\end{equation*}
Suppose that $F(0,\cdot)\equiv0$ and  $F(D^2w,x_0)$ has $W^{2,p_1}$ interior estimates with constant $c_e$ for any $x_0\in B_{8\sqrt{n}}$. Assume that
\begin{gather*}\label{4}
\|u\|_{L^{\infty}(B_{8\sqrt{n}})}\leq 1,~~ \|f\|_{L^{p}(B_{8\sqrt{n}})}\leq \varepsilon
\intertext{and}
    \left(\frac{1}{|B_r(x_0)|}\int_{B_r(x_0)}\beta(x,x_0)^{p_2}dx\right)^{\frac{1}{p_2}}\leq \varepsilon,~~\forall~B_r(x_0)\subset B_{8\sqrt{n}}.
\end{gather*}
Then $u\in W^{2,p}(B_{1/2})$ and
\begin{equation*}\label{eq3.2}
    \|u\|_{{W^{2,p}(B_{1/2})}}\leq C,
\end{equation*}
where $\varepsilon$ and $C$ are positive constants depending only on $n$, $\lambda$, $\Lambda$, $c_e$, $p$, $p_0$, $p_1$, and $p_2$.

Furthermore, combining with Proposition \ref{pr2.10}, (\ref{eq2.1}) and Proposition \ref{pr2.14}, we only need to prove
\begin{equation}\label{eq3.3}
    \sum_{k\geq 1}M^{pk}|A_{M^k}(u,B_{1/2})|\leq C,
\end{equation}
for some constants $M$ and $C$ depending only on $n$, $\lambda$, $\Lambda$, $c_e$, $p_1$, $p_2$ and $p$.
\end{theorem}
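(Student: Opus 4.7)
The plan is to follow the Caffarelli perturbation-and-iteration scheme of Chapter~7 of \cite{C-C}, with two modifications forced by the weaker hypotheses: the $C^{1,1}$ bound on the frozen-coefficient solution is replaced by the assumed $W^{2,p_1}$ estimate together with Lemma~\ref{le3.4}, and the ABP maximum principle is replaced by its $L^{p_0}$ version from Theorem~3.1 of \cite{Fo}. The H\"older exponent identity $\tfrac{1}{p_1}+\tfrac{1}{p_2}=\tfrac{1}{p_0}$ is tailored precisely so that these two ingredients fit together.

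The first step is a single-scale decay estimate: there exist $M_0>1$ and $\mu\in(0,1)$, depending only on the admissible parameters, such that for $\varepsilon$ small,
\[
 |A_{M_0}(u,B_{8\sqrt n})\cap Q_1|\le \mu\,|Q_1|.
\]
To prove this, fix a suitable $x_0$ and solve $F(D^2h,x_0)=0$ on a ball $B\supset Q_1$ with $h=u$ on $\partial B$, so by hypothesis $h\in W^{2,p_1}_{\mathrm{loc}}(B)\cap C(\bar B)$ with $\|h\|_{W^{2,p_1}(B')}\le c_e$ on a slightly smaller ball $B'$. Then $v=u-h$ is a $W^{2,p_0}$-viscosity solution on $B'$ whose right-hand side $g$ satisfies
\[
 |g(x)|\le |f(x)|+\beta(x,x_0)\,\|D^2h(x)\|,
\]
and H\"older's inequality with exponents $(p_1,p_2)$ together with $\|\beta(\cdot,x_0)\|_{L^{p_2}}\le C\varepsilon$ and $\|D^2h\|_{L^{p_1}(B')}\le C$ gives $\|g\|_{L^{p_0}(B')}\le C\varepsilon$. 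Fok's ABP, valid since $p_0>n-\varepsilon_0$, then yields $\|u-h\|_{L^\infty(B')}\le C\varepsilon$. Combining this with the $L^{p_1}$-bound on $\Theta(h,\cdot)$ from Lemma~\ref{le3.4} and a Vitali-type covering argument in the spirit of Lemma~7.10 of \cite{C-C} produces the required decay.

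Next I would promote this single-scale decay to the summability (\ref{eq3.3}) via the Calder\'on--Zygmund dyadic iteration (Proposition~\ref{pr2.13}) and the distribution-function criterion (Proposition~\ref{pr2.14}). For a dyadic cube $Q\subset Q_1$ of side $r$ centered at $x_Q$, the rescaled function $\tilde u(y)=r^{-2}u(x_Q+ry)$ solves an equation of the same form whose rescaled $f_r$ and $\beta_r$ still satisfy the normalized smallness hypothesis; the identity $\tfrac{1}{p_1}+\tfrac{1}{p_2}=\tfrac{1}{p_0}$ is exactly what keeps the hypothesis invariant under this rescaling. Proposition~\ref{pr2.13} then yields, by induction on $k$, that $|A_{M_0^k}(u,B_{1/2})|\le C\mu^k$. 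Since $M_0$ may be taken large enough that $M_0^p\mu<1$, the series $\sum_k M_0^{pk}|A_{M_0^k}|$ converges, which is exactly (\ref{eq3.3}).

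The main obstacle I expect is the verification that $v=u-h$ is genuinely a $W^{2,p_0}$-viscosity solution with the right-hand side claimed above: $h$ only lies in $W^{2,p_1}_{\mathrm{loc}}$, so the identity $F(D^2h,x)=F(D^2h,x)-F(D^2h,x_0)$ is pointwise only a.e., and the subtraction of equations has to be justified through the stability theory of $W^{2,p}$-viscosity solutions from \cite{C-C-K-S}. Coupling this stability with the mere measurability (not continuity) of $F$ in $x$ and with Fok's sub-$n$ ABP is the technically delicate point; once it is in place, the Calder\'on--Zygmund iteration and the rescaling bookkeeping are fairly standard.
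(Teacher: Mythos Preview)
Your overall strategy is the paper's own---approximate by the frozen-coefficient solution, use H\"older with exponents $(p_1,p_2)$ and Fok's ABP for the single-scale step, then iterate via the Calder\'on--Zygmund lemma---but the iteration you sketch has a real gap.

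You claim that after rescaling to a dyadic cube $Q$ of side $r$, ``the rescaled $f_r$ and $\beta_r$ still satisfy the normalized smallness hypothesis,'' and conclude the pure geometric decay $|A_{M_0^k}(u,B_{1/2})|\le C\mu^k$. The $\beta$ claim is fine (the averaged condition on $\beta$ is scale-invariant by design), but the claim for $f$ is false. With the correct normalization $\tilde u(y)=\tfrac{1}{M^k r^2}u(x_Q+ry)$ one gets
\[
\|\tilde f\|_{L^{p_0}(B_{8\sqrt n})}^{p_0}=\frac{r^{-n}}{M^{kp_0}}\int_{B_{cr}(x_Q)}|f|^{p_0},
\]
and the \emph{global} bound $\|f\|_{L^p(B_{8\sqrt n})}\le\varepsilon$ gives no control on this local average: on a small cube the right-hand side can be arbitrarily large, so you cannot feed $\tilde u$ back into the single-scale lemma. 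Consequently the implication ``$|A_{M^{k+1}}\cap Q|>\bar\varepsilon|Q|\Rightarrow\tilde Q\subset A_{M^k}$'' fails, and the pure decay $\alpha_k\le C\mu^k$ does not follow.

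The fix, carried out in the paper's Lemma~\ref{le3.9}, is to enlarge the target set in Proposition~\ref{pr2.13} to
\[
B=\bigl(A_{M^k}(u,B_{8\sqrt n})\cap Q_1\bigr)\cup\bigl\{x\in Q_1:\,m(f^{p_0})(x)\ge (c_3M^k)^{p_0}\bigr\}.
\]
If $\tilde Q\not\subset B$ one picks $x_1\in\tilde Q$ with both $x_1\in G_{M^k}$ (which normalizes $\tilde u$) and $m(f^{p_0})(x_1)<(c_3M^k)^{p_0}$ (which forces $\|\tilde f\|_{L^{p_0}}\le\varepsilon_1$). The resulting recursion is $\alpha_{k+1}\le\bar\varepsilon(\alpha_k+\beta_k)$ with $\beta_k=|\{m(f^{p_0})\ge(c_3M^k)^{p_0}\}|$, and one sums $\sum_k M^{pk}\alpha_k$ using $\sum_k M^{pk}\beta_k\le C\|m(f^{p_0})\|_{L^{p/p_0}}\le C$, which is where $f\in L^p$ (not merely $L^{p_0}$) enters. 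Without the maximal-function term the argument does not close; this, rather than the viscosity-solution subtraction issue you flag, is the missing idea.
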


The outline of the proof of (\ref{eq3.3}) follows that in \cite{C-C}. First, we prove that $|A_t|$ has a power decay in $t$ for $u\in S^q(f)$. Second, we use an approximation to accelerate the power decay corresponding to $F(D^2u)=f$. From now on, unless otherwise stated, $p, p_0,p_1$ and $p_2$ are fixed.

\begin{lemma}\label{le3.5}
Let $\Omega$ be a bounded domain such that $B_{6\sqrt{n}}\subset\Omega$ and $u$ be continuous in $\Omega$. Assume that $\|u\|_{L^{\infty}(\Omega)}\leq 1$ and $u\in S^q(\lambda,\Lambda,f)$ in $B_{6\sqrt{n}}$ with $\| f\|_{L^q(B_{6\sqrt{n}})}\leq 1$ for some $q> n-\varepsilon_0$. Then
\begin{equation}\label{eq3.5}
    |A_t(u,\Omega)\cap Q_1|\leq C_1t^{-\mu},~~\forall~t>0,
\end{equation}
where $C_1$ and $\mu$ are positive constants depending only on $n$, $\lambda$, $\Lambda$ and $q$.
\end{lemma}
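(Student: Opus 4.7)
The proof follows the now-classical Caffarelli--Cabr\'e scheme (\cite{C-C}, Chapter 7), the only new ingredient being that the underlying \textsc{abp} estimate is taken from Fok \cite{Fo} so that it is applicable for $q>n-\varepsilon_0$. The goal is to obtain an exponential-in-$k$ estimate of the form
\begin{equation*}
\bigl|A_{M^{k}}(u,\Omega)\cap Q_{1}\bigr|\le (1-\sigma)^{k}
\end{equation*}
for suitable $M>1$ and $\sigma\in(0,1)$ depending only on $n,\lambda,\Lambda,q$; the polynomial decay \eqref{eq3.5} is then immediate with $\mu=-\log(1-\sigma)/\log M$.

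\textbf{Step 1 (Initial density estimate).} I would first prove a ``base step'': there exist $M_{0}>1$ and $\sigma\in(0,1)$ such that
\begin{equation*}
|A_{M_{0}}(u,\Omega)\cap Q_{1}|\le \sigma .
\end{equation*}
The standard route is to exhibit a fixed barrier $\varphi\in C^{2}(\bar B_{6\sqrt n})$ with $\varphi\equiv 0$ on $\partial B_{6\sqrt n}$, $\varphi\le -2$ on $\bar Q_{3}$ and with $M^{-}(D^{2}\varphi)\ge -\psi(x)$ for some non-negative $\psi\in L^{\infty}(B_{6\sqrt n})$ supported in a small ball $B_{1/4}$; a polynomial power of $|x|$ works here. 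Applying Fok's \textsc{abp} (Theorem~3.1 of \cite{Fo}) to $u+\varphi\in \underline S^{q}(\lambda,\Lambda,f)$ (after normalising) one gets that the contact set of the convex envelope of $(u+\varphi)^{-}$ has positive measure controlled from below by $c-C\|f\|_{L^{q}}^{\,?}$, and this contact set is contained in $\overline G_{M_{0}}(u,\Omega)\cap Q_{1}$ (every contact point yields a supporting paraboloid of bounded opening). Repeating the argument for $-u$ and intersecting the two sets gives the density bound for $G_{M_{0}}$, equivalently the bound for $A_{M_{0}}$. The hypotheses $\|u\|_{L^\infty}\le 1$, $\|f\|_{L^{q}}\le 1$ are used precisely at this point, and the smallness needed to absorb the $L^{q}$ term in the \textsc{abp} bound is guaranteed after the normalization.

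\textbf{Step 2 (Iteration via Calder\'on--Zygmund).} I would then iterate the estimate dyadically. Set $A_{k}=A_{M_{0}^{k}}(u,\Omega)\cap Q_{1}$. By Proposition~\ref{pr2.13} applied to $A=A_{k+1}$ and $B=A_{k}$, it suffices to check: if $Q$ is a dyadic subcube of $Q_{1}$ with $|A_{k+1}\cap Q|>\sigma|Q|$, then the predecessor $\widetilde Q$ is contained in $A_{k}$. If not, some point $x_{0}\in\widetilde Q$ would belong to $G_{M_{0}^{k}}(u,\Omega)$, yielding convex and concave paraboloids $P^{\pm}$ of opening $M_{0}^{k}$ touching $u$ at $x_{0}$ throughout $\Omega\supset \widetilde Q$. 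Rescaling $v(y)=[u-\tfrac12(P^{+}+P^{-})](x_{Q}+\tfrac{r}{6\sqrt n}y)\cdot(M_{0}^{k}r^{2})^{-1}$ over $Q=Q_{r}(x_{Q})$ reduces the problem to one satisfying the hypotheses of Step~1 (with source $\tilde f$ whose $L^{q}$ norm shrinks thanks to the scaling of $L^{q}$ when $q>n-\varepsilon_{0}\ge n/2$), so Step~1 gives $|A_{M_{0}}(v,\Omega')\cap Q_{1}|\le\sigma$, which undoes to $|A_{M_{0}^{k+1}}\cap Q|\le\sigma|Q|$, a contradiction.

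\textbf{Step 3 (Conclusion).} Iterating the conclusion $|A_{k+1}|\le\sigma|A_{k}|$ with the trivial bound $|A_{0}|\le|Q_{1}|=1$ gives $|A_{k}|\le\sigma^{k}$, which rephrases as \eqref{eq3.5} with $C_{1}$ and $\mu=-\log\sigma/\log M_{0}$ depending only on $n,\lambda,\Lambda,q$.

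\textbf{Main obstacle.} The delicate point is the base-step \textsc{abp} argument at Step~1: the $L^{q}$ norm of the source controlling the measure of the contact set must be taken with $q>n-\varepsilon_{0}$, and one must verify that the smallness constant obtained from Fok's theorem (together with the bounds on $\varphi$ and on $\|u\|_{L^\infty}$) is strictly smaller than the measure of the target cube $Q_{1}$, so that $\sigma<1$ in the iteration. Also, the rescaling in Step~2 changes the $L^{q}$ norm of $f$ by a factor $r^{2-n/q}$, which is harmless as long as $q>n-\varepsilon_0>n/2$; this is exactly the range where Fok's version of \textsc{abp} remains available.
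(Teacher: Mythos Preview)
Your overall plan---adapt the Caffarelli--Cabr\'e barrier/\textsc{abp}/CZ scheme with Fok's \textsc{abp} in place of the classical one---is a legitimate alternative to the paper's argument, but Step~2 contains a real error. When you normalise $v(y)=(M_0^{k}r^{2})^{-1}\bigl[u-\tfrac12(P^{+}+P^{-})\bigr](x_Q+\tfrac{r}{6\sqrt n}\,y)$ so that $\|v\|_{L^\infty}$ is under control, the rescaled source is $\tilde f(y)=c(n)\,M_0^{-k}\,f(x_Q+\tfrac{r}{6\sqrt n}\,y)$, and hence
\[
\|\tilde f\|_{L^{q}(B_{6\sqrt n})}\;=\;c(n)\,M_0^{-k}\,r^{-n/q}\,\|f\|_{L^{q}(B_{r}(x_Q))}\;\le\;c(n)\,M_0^{-k}\,r^{-n/q}.
\]
The factor $r^{2-n/q}$ you quote would appear only if you did \emph{not} divide by $M_0^{k}r^{2}$---but then $\|v\|_{L^\infty}$ is uncontrolled. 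Since in the CZ decomposition $r=2^{-i}$ can be arbitrarily small for each fixed $k$, $\|\tilde f\|_{L^{q}}$ is in general unbounded and Step~1 cannot be invoked. The standard repair is to enlarge the target set in the CZ step to $B=A_k\cup\{x\in Q_1:m(f^{q})(x)>(cM_0^{k})^{q}\}$: if $x_1\in\widetilde Q\setminus B$ you gain, besides the touching paraboloid, the local bound $\|f\|_{L^{q}(B_r(x_Q))}\le Cr^{n/q}M_0^{k}$, and then $\|\tilde f\|_{L^{q}}\le C$. In Step~3 the extra term $\beta_k:=|\{m(f^{q})>(cM_0^{k})^{q}\}|\le CM_0^{-kq}$ by the weak-$(1,1)$ maximal inequality, and the recursion $\alpha_{k+1}\le\sigma(\alpha_k+\beta_k)$ still yields $\alpha_k\le C\,M_0^{-\mu k}$ for some $\mu>0$.

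For comparison, the paper avoids the rescaling issue altogether by a device of Escauriaza: solve the auxiliary problem $M^{-}(D^{2}v)=f^{+}$ in $B_{6\sqrt n}$ with $v=0$ on the boundary (existence and $\|v\|_{L^\infty}\le C$ from Fok's \textsc{abp}); then both $u-v$ and $-v$ are viscosity supersolutions of a \emph{homogeneous} Pucci equation, to which Lemma~7.8 of \cite{C-C} applies directly, giving the decay for $\underline A_t(u-v)$ and $\underline A_t(-v)$. The passage from $\underline A_t(-v)$ to $\underline A_t(v)$ is obtained pointwise via Fok's Harnack inequality, at the cost of the set $\{m(f^{q})>t^{q}\}$. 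So both routes ultimately meet at the maximal function of $f^{q}$; the paper trades your direct iteration for a cleaner reduction to the homogeneous case used as a black box.
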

\begin{proof}
We adopt the idea from Lemma 4 in \cite{Es}. It follows from Corollary 3.10\cite{C-C-K-S} that there exists a unique $W^{2,q}-$viscosity solution $v\in W^{2,q}_{loc}(B_{6\sqrt{n}})\cap C(\bar B_{6\sqrt{n}})$ of
\begin{equation*}
\left\{\begin{aligned}
M^{-}(D^2v)=f^+~~~~\mbox{in}~~B_{6\sqrt{n}},\\
v=0~~~~\mbox{on}~~\partial B_{6\sqrt{n}}.
\end{aligned}\right.
\end{equation*}
By the generalized Aleksandrov-Bakelman-Pucci maximum principle (see Theorem 3.1 and Remark 3.2\cite{Fo}), we have
\begin{equation*}
    0\geq v \geq -C\|f^+\|_{L^q(B_{6\sqrt{n}})}\geq -C.
\end{equation*}

Clearly, $M^-(D^2(u-v))\leq 0~~\mbox{in}~B_{6\sqrt{n}}$ in the viscosity sense. Extend $v$ continuously in $\Omega$ such that $|v|\leq C$. Hence, $\frac{u-v}{1+C}\leq 1$ in $\Omega$. Then by Lemma 7.8\cite{C-C}, we have
\begin{equation}\label{n13172}
    |\underline A_t(u-v,\Omega)\cap Q_1|\leq Ct^{-\mu},
\end{equation}
where $\mu$ depends only on $n$, $\lambda$ and $\Lambda$. Similarly, since $M^-(-D^2v)=-M^+(D^2v)\leq -f^+\leq 0$, we have
\begin{equation}\label{n13173}
    |\underline A_t(-v,\Omega)\cap Q_1|\leq Ct^{-\mu}.
\end{equation}

Next, we show that there exists a constant $C$ such that, for any $t\geq 1$,
\begin{equation}\label{n1317}
\underline A_{Ct}(v,\Omega)\cap Q_1 \subset \underline A_{t}(-v,\Omega)\cup \{x\in Q_1:m(f^q)(x)> t^q\}\cup N,
\end{equation}
where $N$ is a set of measure zero. Equivalently, we show that
\begin{equation}\label{n1317.3}
\underline G_{Ct}(v,\Omega)\cap Q_1\cup N \supset \underline G_{t}(-v,\Omega)\cap \{x\in Q_1:m(f^q)(x)\leq t^q\}.
\end{equation}

On the one hand, since $v$ is differentiable almost everywhere,  for $a.e.$ $x\in \underline G_t(-v,\Omega)$, we have
\begin{equation}\label{eq_n1}
v(y)\leq v(x)+Dv(x)(y-x)+\frac{t}{2}|y-x|^2,~~~~\forall~y\in\Omega.
\end{equation}
In particular, $w(y):=v(x)+Dv(x)(y-x)+2tr^2-v(y)$ is nonnegative and $M^+(D^2w)=-f^+$ in $B_{2r}(x)$ wherever $B_{2r}(x)\subset\Omega$. By the Harnack inequality (see Theorem 5.20\cite{Fo}), we have
\begin{equation*}
\sup_{B_r(x)}w\leq C\left(\inf_{B_r(x)}w+r^{2-n/q}\|f\|_{L^q\left(B_r(x)\right)}\right).
\end{equation*}
On the other hand, since $m(f^q)(x)\leq t^q$, we have $\|f\|_{L^q\left(B_r(x)\right)}\leq t r^{n/q}$. Hence,
\begin{equation*}
w(y)\leq\sup_{B_r(x)}w\leq C\left(\inf_{B_r(x)}w+tr^2\right)\leq C\left(w(x)+tr^2\right)=Ctr^2,~~~~\forall~y\in B_r(x),
\end{equation*}
or
\begin{equation}\label{eq_n2}
v(y)\geq v(x)+Dv(x)(y-x)-C|y-x|^2,~~~~\forall~y\in B_1(x).
\end{equation}

Taking $y=x-Dv(x)/|Dv(x)|\sigma$ in (\ref{eq_n1}) where $\sigma$ is chosen such that $y\in\partial B_{6\sqrt{n}}$ , and noting that $|v|\leq C$ and $t\geq 1$, we get
\begin{equation*}
|Dv(x)|\leq Ct.
\end{equation*}
Therefore, combining with (\ref{eq_n2}), we have
\begin{equation*}\label{}
v(y)\geq v(x)+Dv(x)(y-x)-Ct|y-x|^2,~~~~\forall~y\in\Omega.
\end{equation*}
Hence, $x\in \underline G_{Ct}(v,\Omega)\cap Q_1$, i.e., (\ref{n1317.3}) holds. Finally, from (\ref{n13172}), (\ref{n13173}) and (\ref{n1317}), we have
\begin{equation*}
\begin{aligned}
|\underline A_{2Ct}(u,\Omega)\cap Q_1|\leq |\underline A_{Ct}(u-v,\Omega)\cap Q_1|+|\underline A_{Ct}(v,\Omega)|\leq Ct^{-\mu}~~~~\forall~t>0.
\end{aligned}
\end{equation*}
By the similar argument, we have
\begin{equation*}
\begin{aligned}
|\overline A_{2Ct}(u,\Omega)\cap Q_1|\leq Ct^{-\mu}~~~~\forall~t>0.
\end{aligned}
\end{equation*}
This implies (\ref{eq3.5}).
\end{proof}

\begin{remark}\label{re3.3}
(\ref{eq3.5}) implies that $u\in W^{2,\delta}$ for some $\delta>0$. This was first proved by Lin \cite{Lin}, where $q\geq n$ is needed.
\end{remark}

We will use the following approximation lemma to accelerate the power decay of $A_t$.

\begin{lemma}\label{le3.6}
Let $0<\varepsilon<1$ and $u$ be a $W^{2,p_0}-$viscosity solution of
\begin{equation*}\label{}
    F(D^2u,x)=f(x)~~~~in~~B_{8\sqrt{n}}.
\end{equation*}
Suppose that $F(0,\cdot)\equiv 0$ and $F(D^2w,0)=0$ has $W^{2,p_1}$ interior estimates with constant $c_e$. Assume that
\begin{equation*}
\| u\|_{L^{\infty}(B_{8\sqrt{n}})}\leq 1,\mbox{ and }\|\beta \|_{L^{p_2}(B_{7\sqrt{n}})}=\|\beta(\cdot,0) \|_{L^{p_2}(B_{7\sqrt{n}})}\leq \varepsilon.
\end{equation*}

Then there exists $h\in W^{2,p_{1}}(B_{6\sqrt{n}})\cap C(\bar B_{7\sqrt{n}})$ such that $\|h\|_{W^{2,p_{1}}(B_{6\sqrt{n}})}\leq c(n)c_e$ (for a constant $c(n)$ depending only on $n$) and
\begin{equation}\label{eq3.6}
    \|u-h\|_{L^{\infty}(B_{6\sqrt{n}})}+\|\varphi\|_{L^{p_0}(B_{6\sqrt{n}})}\leq C_2(\varepsilon^{\gamma}+\|f\|_{L^{p_0}(B_{8\sqrt{n}})}),
\end{equation}
where $\varphi(x)=f(x)-F(D^2h(x),x)$, and $C_2$ and $0<\gamma<1$ are positive constants depending only on $n$, $\lambda$, $\Lambda$, $c_e$, $p_0$, $p_1$ and $p_2$.
\end{lemma}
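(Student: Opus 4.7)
The plan is to take $h$ as the $W^{2,p_1}$-viscosity solution of the frozen equation $F(D^2h,0)=0$ in $B_{7\sqrt{n}}$ with boundary data $h=u$ on $\partial B_{7\sqrt{n}}$; this exists by the hypothesis since $u\in C(\partial B_{7\sqrt{n}})$. By the ABP maximum principle (Theorem~3.1 and Remark~3.2 in \cite{Fo}) applied to $F(\cdot,0)$, $\|h\|_{L^\infty(B_{7\sqrt{n}})}\leq 1$. For the $W^{2,p_1}$ bound on $B_{6\sqrt{n}}$, I would apply the definition's interior estimate, rescaled from $B_1\to B_{1/2}$ to $B_{\sqrt{n}}(y)\to B_{\sqrt{n}/2}(y)$, for a finite family of balls $B_{\sqrt{n}}(y_i)\subset B_{7\sqrt{n}}$ whose smaller halves cover $B_{6\sqrt{n}}$, yielding $\|h\|_{W^{2,p_1}(B_{6\sqrt{n}})}\leq c(n)c_e$.

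For the $L^{p_0}$ bound on $\varphi$, the key observation is that because $p_1>n$, a $W^{2,p_1}$-viscosity solution satisfies its PDE almost everywhere (standard in the theory of \cite{C-C-K-S}). Hence $F(D^2h(x),0)=0$ a.e., so by Definition~\ref{de3.2},
\[
|F(D^2h(x),x)|=|F(D^2h,x)-F(D^2h,0)|\leq\beta(x)\|D^2h(x)\|\quad \text{a.e. in } B_{6\sqrt{n}}.
\]
The relation $1/p_0=1/p_1+1/p_2$ is tailor-made for H\"older, and combining it with the Step~1 bound on $D^2h$ and the smallness of $\beta$ in $L^{p_2}$ yields
\[
\|\varphi\|_{L^{p_0}(B_{6\sqrt{n}})}\leq\|f\|_{L^{p_0}(B_{8\sqrt{n}})}+\|\beta\|_{L^{p_2}}\|D^2h\|_{L^{p_1}(B_{6\sqrt{n}})}\leq\|f\|_{L^{p_0}(B_{8\sqrt{n}})}+c(n)c_e\varepsilon.
\]

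For $\|u-h\|_{L^\infty(B_{6\sqrt{n}})}$, since $h$ is $W^{2,p_1}$- and hence $W^{2,p_0}$-viscosity and $F$ is uniformly elliptic, a standard closure result for viscosity solutions from \cite{C-C-K-S} gives that $w:=u-h$ lies in $S^{p_0}(\lambda,\Lambda,\varphi)$ on $B_{7\sqrt{n}}$ and vanishes on $\partial B_{7\sqrt{n}}$. Fok's generalized ABP principle (Theorem~3.1 in \cite{Fo}), applicable because $p_0>n-\varepsilon_0$, would then give $\|w\|_{L^\infty}\leq C\|\varphi\|_{L^{p_0}}$. The main obstacle is that the $W^{2,p_1}$ bound on $h$ is only interior (in $B_{6\sqrt{n}}$), so $\|\varphi\|_{L^{p_0}(B_{7\sqrt{n}})}$ is not directly controlled near $\partial B_{7\sqrt{n}}$. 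I would handle this by applying ABP on an intermediate ball $B_R$ with $6\sqrt{n}<R<7\sqrt{n}$, where the covering argument still yields an ($R$-dependent) $W^{2,p_1}$ bound on $h|_{B_R}$; controlling the boundary trace $\|w\|_{L^\infty(\partial B_R)}$ via the interior Krylov--Safonov $C^\alpha$ estimate for $u$ and a matching boundary H\"older estimate for $h$ (both controlled by $\|u\|_{L^\infty}$ and $\|f\|_{L^{p_0}}$); and then optimizing the choice $7\sqrt{n}-R$ as a small power of $\varepsilon$. The resulting trade-off produces the exponent $\gamma\in(0,1)$. Combining with the previous step and absorbing $\varepsilon\leq\varepsilon^\gamma$ for $\varepsilon\leq 1$ completes the proof.
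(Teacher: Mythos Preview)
Your proposal is correct and follows essentially the same route as the paper: define $h$ as the $W^{2,p_1}$-viscosity solution of the frozen problem in $B_{7\sqrt n}$ with boundary data $u$, obtain the $W^{2,p_1}$ bound on $B_{6\sqrt n}$ by covering, bound $\varphi$ via $|F(D^2h,x)-F(D^2h,0)|\le\beta(x)\|D^2h\|$ and H\"older with exponents $(p_1,p_2,p_0)$, and then---exactly as you anticipate---apply the generalized ABP to $u-h\in S^{p_0}(\lambda/n,\Lambda,\varphi)$ on the intermediate ball $B_{7\sqrt n-\delta}$, control the boundary trace using the interior $C^\alpha$ estimate for $u$ and the global $C^{\alpha/2}$ estimate for $h$, and optimize $\delta$ as a power of $\varepsilon$ to produce $\gamma$. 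The only point you leave implicit is that the $\|D^2h\|_{L^{p_1}(B_{7\sqrt n-\delta})}$ bound needed on the intermediate ball is obtained by covering with balls of radius $\delta$ and feeding the $C^{\alpha/2}$ oscillation of $h$ into the scaled interior estimate, which makes the $\delta$-dependence explicit; this is precisely what the paper does.
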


\begin{proof}
Let $h\in W_{loc}^{2,p_{1}}(B_{7\sqrt{n}})\cap C(\bar B_{7\sqrt{n}})$ be the $W^{2,p_1}-$viscosity solution of
\begin{equation*}\label{}
    \left\{\begin{aligned}
        F(D^2h,0)&=0~~~~\mbox{in}~~B_{7\sqrt{n}},\\
        h&=u~~~~\mbox{on}~~\partial B_{7\sqrt{n}}.
    \end{aligned}\right.
\end{equation*}
Using the $W^{2,p_{1}}$ interior estimates and a covering argument, we have
\begin{equation*}\label{}
    \|h\|_{W^{2,p_{1}}(B_{6\sqrt{n}})}\leq c(n)c_e\|u\|_{L^{\infty}(B_{7\sqrt{n}})}\leq c(n)c_e.
\end{equation*}
By interior H\"{o}lder estimates (see Theorem 5.21\cite{Fo}), we know that $u\in C^{\alpha}(\bar{B}_{7\sqrt{n}})$ and
\begin{equation}\label{eq3.7}
    \|u\|_{C^{\alpha}(\bar{B}_{7\sqrt{n}})}\leq C\left(1+\|f\|_{L^{p_0}(B_{8\sqrt{n}})}\right),
\end{equation}
where $0<\alpha<1$ and $C>0$ depend only on $n$, $\lambda$, $\Lambda$ and $p_0$. From the interior H\"{o}lder estimates, we easily get the following global H\"{o}lder estimates (see Proposition 4.12 and Proposition 4.13 in \cite{C-C}):
\begin{equation}\label{eq3.8}
    \|h\|_{C^{\alpha/2}(\bar{B}_{7\sqrt{n}})}\leq C\|u\|_{C^{\alpha}(\bar{B}_{7\sqrt{n}})}\leq C(1+\|f\|_{L^{p_0}(B_{8\sqrt{n}})}),
\end{equation}
where $C>0$ depends only on $n$, $\lambda$ and $\Lambda$.

Let $0<\delta<1$ and $x_0\in B_{7\sqrt{n}-\delta}$. Then $B_{\delta}(x_0)\subset B_{7\sqrt{n}}$ and we apply the scaled version of $W^{2,p_{1}}$ interior estimates in $B_{\delta}(x_0)$ to $h-h(x_1)$ where $x_1\in\partial B_{\delta}(x_0)$; we get
\begin{equation*}\label{}
    \|D^2h\|_{L^{p_1}(B_{\delta/2}(x_0))}\leq c_e\delta^{\frac{n-2p_1}{p_1}}\|h-h(x_1)\|_{L^{\infty}(\partial B_{\delta}(x_0))}\leq C \delta^{\frac{n-2p_1}{p_1}}\delta^{\frac{\alpha}{2}}(1+\|f\|_{L^{p_0}(B_{8\sqrt{n}})}),
\end{equation*}
by (\ref{eq3.8}). By a standard covering argument, we have
\begin{equation}\label{add_12_10_1}
   \|D^2h\|_{L^{p_1}(B_{7\sqrt{n}-\delta})}\leq C \delta^{\frac{n-2p_1}{p_1}}\delta^{\frac{\alpha}{2}}\delta^{-n}(1+\|f\|_{L^{p_0}(B_{8\sqrt{n}})}).
\end{equation}
From the definition of $\beta$, we have
\begin{equation*}\label{}
    |F(D^2h(x_0),x_0)|\leq \beta(x_0)\|D^2h(x_0)\|~~\mbox{for}~a.e.~x_0\in B_{7\sqrt{n}-\delta}.
\end{equation*}
Thus,
\begin{equation}\label{add_12_10_2}
    \|F(D^2h(x),x)\|_{L^{p_0}(B_{7\sqrt{n}-\delta})}\leq \|\beta\|_{L^{p_2}(B_{7\sqrt{n}-\delta})}\|D^2h\|_{L^{p_1}(B_{7\sqrt{n}-\delta})},
\end{equation}
recall $\frac{1}{p_1}+\frac{1}{p_2}=\frac{1}{p_0}$. On the other hand, since $u-h=0$ on $\partial B_{7\sqrt{n}}$, we have, by (\ref{eq3.7}) and (\ref{eq3.8}), that
\begin{equation}\label{add_12_10_3}
    \|u-h\|_{L^{\infty}(\partial B_{7\sqrt{n}-\delta})}\leq C\delta^{\frac{\alpha}{2}}\left(1+\|f\|_{L^{p_0}(B_{8\sqrt{n}})}\right).
\end{equation}

Finally, since $u-h\in S^p(\lambda/n,\Lambda,\varphi)$ in $B_{7\sqrt{n}}$ where $\varphi(x)=f(x)-F(D^2h(x),x)$, combining with (\ref{add_12_10_1}), (\ref{add_12_10_2}) and (\ref{add_12_10_3}), we conclude that
\begin{equation*}
\begin{split}
&\|u-h\|_{L^{\infty}(B_{7\sqrt{n}-\delta})}+\|\varphi\|_{L^{p_0}(B_{7\sqrt{n}-\delta})}\\
&\leq \|u-h\|_{L^{\infty}(\partial B_{7\sqrt{n}-\delta})} +C\|\varphi\|_{L^{p_0}(B_{7\sqrt{n}-\delta})}\mbox{ (by the ABP estimates)}\\
&\leq C\delta^{\frac{\alpha}{2}}\left(1+\|f\|_{L^{p_0}(B_{8\sqrt{n}})}\right)+ C\|f\|_{L^{p_0}(B_{7\sqrt{n}-\delta})}+C\|F(D^2h(x),x)\|_{L^{p_0}(B_{7\sqrt{n}-\delta})}\\
\end{split}
\end{equation*}
Take $\delta=\varepsilon^{p_1/(np_1+2p_1-n)}$ to get (\ref{eq3.6}) with $\gamma=\frac{p_1\alpha}{2(np_1+2p_1-n)}$ (note that $B_{6\sqrt{n}}\subset B_{7\sqrt{n}-\delta}$).
\end{proof}

\begin{lemma}\label{le3.7}
Take $0<\bar\varepsilon<1$ and $M_1>1$ satisfying that
\begin{equation}\label{add_eq3.2}
    M_1^{p_1}\bar\varepsilon\geq 2\tilde{C}~~~~\mbox{and}~~~~(c_nM_1)^{p}\bar\varepsilon\leq 1/2,
\end{equation}
where $\tilde{C}$ is a constant which will be determined below and depends only on $n$, $c_e$ and $p_1$, and $c_n>1$ is a constant depending only on n which will be determined in (\ref{add_eq3.1}).

Let $\Omega$ be a bounded domain such that $B_{8\sqrt{n}}\subset\Omega$ and $u\in C(\Omega)$ be a $W^{2,p_0}-$viscosity solution of $F(D^2u,x)=f(x)$ in $B_{8\sqrt{n}}$. Suppose that $F(0,\cdot)\equiv0$ and $F(D^2w,0)=0$ has $W^{2,p_1}$ interior estimates with constant $c_e$. Assume that
\begin{equation*}
\begin{aligned}
&\|u\|_{L^{\infty}(B_{8\sqrt{n}})}\leq 1,~-|x|^2\leq u(x)\leq |x|^2~in~\Omega\backslash B_{6\sqrt{n}},\\
&~\|f\|_{L^{p_0}(B_{6\sqrt{n}})}\leq\varepsilon_1~and~\|\beta\|_{L^{p_2}(B_{7\sqrt{n}})}=\|\beta(\cdot,0)\|_{L^{p_2}(B_{7\sqrt{n}})}\leq \varepsilon_1,
\end{aligned}
\end{equation*}
where $0<\varepsilon_1<1$ depends only on $n$, $\lambda$, $\Lambda$, $c_e$, $p_0$, $p_1$, $p_2$ and $\bar\varepsilon$.

Then
\begin{equation}\label{eq3.9}
    |G_{M_1}(u,\Omega)\cap Q_1|\geq 1-\bar\varepsilon,
\end{equation}
\end{lemma}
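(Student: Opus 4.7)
The plan is to combine the approximation Lemma~\ref{le3.6} with the power decay Lemma~\ref{le3.5} in the standard add-the-paraboloids manner: approximate $u$ by a $W^{2,p_1}$ function $h$, control the ``bad set'' for $h$ via Lemma~\ref{le3.4} and Chebyshev, control the bad set for $v:=u-h$ via Lemma~\ref{le3.5}, and, on the complement, sum touching paraboloids for $h$ and $v$ to get one for $u$.

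Setting up, apply Lemma~\ref{le3.6} with $\varepsilon=\varepsilon_1$ to produce $h\in W^{2,p_1}(B_{6\sqrt n})\cap C(\bar B_{7\sqrt n})$ with $h=u$ on $\partial B_{7\sqrt n}$, $\|h\|_{W^{2,p_1}(B_{6\sqrt n})}\le c(n)c_e$, and closeness parameter $\eta:=\|u-h\|_{L^\infty(B_{6\sqrt n})}+\|\varphi\|_{L^{p_0}(B_{6\sqrt n})}\le C_2(\varepsilon_1^\gamma+\varepsilon_1)$ with $\varphi=f-F(D^2h,\cdot)$, which shrinks with $\varepsilon_1$. Since $u-h$ solves a Pucci sandwich with RHS $\varphi$ and zero boundary data on $\partial B_{7\sqrt n}$, the ABP bound promotes the $L^\infty$-closeness to all of $B_{7\sqrt n}$, and extending $h\equiv u$ on $\Omega\setminus B_{7\sqrt n}$ makes $v:=u-h$ continuous on $\Omega$ with $\|v\|_{L^\infty(\Omega)}\lesssim\eta$ and $v\equiv0$ off $B_{7\sqrt n}$.

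For the bad set of $h$, Lemma~\ref{le3.4} applied on $B_{6\sqrt n}$ gives $\|\Theta(h,B_{6\sqrt n})\|^{p_1}_{L^{p_1}}\le\tilde C$, so Chebyshev together with Remark~\ref{re_330} yields
\[
|A_{M_1/2}(h,B_{6\sqrt n})\cap Q_1|\le \frac{2^{p_1}\tilde C}{M_1^{p_1}}\le\frac{\bar\varepsilon}{2}
\]
via the hypothesis $M_1^{p_1}\bar\varepsilon\ge 2\tilde C$ (with $2^{p_1}$ absorbed into $\tilde C$). For the bad set of $v$, uniform ellipticity places $v\in S^{p_0}(\lambda/n,\Lambda,\varphi)$ in $B_{6\sqrt n}$, so Lemma~\ref{le3.5} applied to $v/\eta$ gives $|A_{M_1/2}(v,\Omega)\cap Q_1|\le C\eta^\mu M_1^{-\mu}$; since $M_1$ has already been determined by $\bar\varepsilon$, I would then pick $\varepsilon_1$ (and hence $\eta$) small enough, depending on $n,\lambda,\Lambda,c_e,p_0,p_1,p_2,\bar\varepsilon$, to drive this below $\bar\varepsilon/2$.

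For $x\in Q_1$ outside both bad sets, summing a convex paraboloid of opening $M_1/2$ that touches $h$ from above at $x$ in $B_{6\sqrt n}$ with one of opening $M_1/2$ that touches $v$ from above at $x$ in $\Omega$ produces a convex paraboloid $P_u$ of opening $M_1$ touching $u=h+v$ from above at $x$ inside $B_{6\sqrt n}$; the symmetric concave-side construction closes the sandwich. The main obstacle — and the reason for the barrier hypothesis $-|y|^2\le u(y)\le|y|^2$ on $\Omega\setminus B_{6\sqrt n}$ — is upgrading ``touching in $B_{6\sqrt n}$'' to ``touching in $\Omega$''. The linear part $\ell$ of $P_u$ is $O(M_1)$ (obtained by testing each of the two constituent paraboloids against a point on $\partial B_{6\sqrt n}$ antiparallel to its linear part, where $\|h\|_\infty$ and $\|v\|_\infty$ are bounded), so for $|y-x|\ge 5\sqrt n$ the quadratic term $\tfrac{M_1}{2}|y-x|^2$ dominates both $|\ell|\,|y-x|$ and the barrier $|y|^2\le(|y-x|+\tfrac{\sqrt n}{2})^2$ once $M_1$ exceeds a threshold depending only on $n$ — this is the role of the constant $c_n$ in the auxiliary hypothesis $(c_nM_1)^p\bar\varepsilon\le 1/2$. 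The union bound over the two bad sets then gives $|G_{M_1}(u,\Omega)\cap Q_1|\ge 1-\bar\varepsilon$.
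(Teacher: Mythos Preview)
Your approach is essentially the paper's: approximate $u$ by $h$ via Lemma~\ref{le3.6}, bound $|A_t(h,\Omega)\cap Q_1|$ via Lemma~\ref{le3.4} and Chebyshev, bound $|A_t(u-h,\Omega)\cap Q_1|$ via Lemma~\ref{le3.5} after normalizing, and sum. Two small points where you diverge from the paper are worth flagging.

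First, the ABP step you use to promote $\|u-h\|_{L^\infty}$ from $B_{6\sqrt n}$ to all of $B_{7\sqrt n}$ would require $\|\varphi\|_{L^{p_0}(B_{7\sqrt n})}$, which you do not control: the $W^{2,p_1}$ bound on $h$ is only interior, so $\|D^2h\|_{L^{p_1}}$ (and hence $\|\varphi\|_{L^{p_0}}$) may blow up near $\partial B_{7\sqrt n}$. The paper sidesteps this by simply \emph{overriding} $h$ on the annulus: it takes a continuous extension of $h|_{B_{6\sqrt n}}$ to $\Omega$ with $h=u$ in $\Omega\setminus B_{7\sqrt n}$ and $\|u-h\|_{L^\infty(\Omega)}=\|u-h\|_{L^\infty(B_{6\sqrt n})}$. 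With this choice the normalization $w=(u-h)/(2C_2\varepsilon_1^\gamma)$ satisfies the hypotheses of Lemma~\ref{le3.5} immediately, and $u-h\in S^{p_0}(\lambda/n,\Lambda,\varphi)$ is needed only in $B_{6\sqrt n}$, where it holds.

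Second, your reading of $c_n$ is off. The constant $c_n$ is not a threshold for $M_1$ that makes your barrier upgrade go through; it is the rescaling factor introduced in the \emph{next} lemma at~(\ref{add_eq3.1}), and the condition $(c_nM_1)^p\bar\varepsilon\le 1/2$ is there only so that $M:=c_nM_1$ satisfies $M^p\bar\varepsilon\le 1/2$ in the final summation. In the paper the upgrade from touching in $B_{6\sqrt n}$ to touching in $\Omega$ is done at the level of $h$ (not $u$): the extension above satisfies $-2-|x|^2\le h(x)\le 2+|x|^2$ in $\Omega\setminus B_{6\sqrt n}$, and the resulting dimensional inflation of the opening is absorbed into the constant $C_3$ in $|A_t(h,\Omega)\cap Q_1|\le C_3t^{-p_1}$, hence into $\tilde C=2\cdot 2^{p_1}C_3$.
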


\begin{proof}
Take $0<\varepsilon_1<1$ to be chosen later. Let $h$ be given by Lemma \ref{le3.6} applied with $\varepsilon_1$. Recall that $h\in C(\bar{B}_{6\sqrt{n}})$ and we extend $h$ outside $B_{6\sqrt{n}}$ continuously such that $h=u$ in $\Omega\backslash B_{7\sqrt{n}}$ and $\|u-h\|_{L^{\infty}(\Omega)}=\|u-h\|_{L^{\infty}(B_{6\sqrt{n}})}$. Since $\|h\|_{L^{\infty}(B_{6\sqrt{n}})}\leq \|u\|_{L^{\infty}(B_{7\sqrt{n}})} \leq 1$ (by maximum principle), we have $\|u-h\|_{L^{\infty}(\Omega)}\leq 2$ and hence, $-2-|x|^2\leq h(x)\leq 2+|x|^2$ in $\Omega\backslash B_{6\sqrt{n}}$. Combining with Lemma \ref{le3.4}, (\ref{eq2.1}) and $\|h\|_{W^{2,p_1}(B_{6\sqrt{n}})}\leq C$ where $C$ depends only on $n$ and $c_e$, we have
\begin{equation}\label{n319}
|A_t(h,\Omega)\cap Q_1|\leq C_3t^{-p_1},
\end{equation}
where $C_3$ depends only on $n$, $c_e$ and $p_1$.

Consider
\begin{equation*}
    w=\frac{u-h}{2C_2\varepsilon_1^{\gamma}},
\end{equation*}
where $C_2$ and $\gamma$ are the constants in (\ref{eq3.6}). It is easy to check, using Lemma \ref{le3.6}, that $w$ satisfies the hypothesis of Lemma \ref{le3.5} in $\Omega$. We apply Lemma \ref{le3.5} and get
\begin{equation*}\label{}
    |A_t(w,\Omega)\cap Q_1|\leq C_1t^{-\mu},~~\forall~t>0.
\end{equation*}
Thus, $|A_t(u-h,\Omega)\cap Q_1|\leq C\varepsilon_1^{\gamma\mu}t^{-\mu}$, for a constant $C$ depending only on $n$, $\lambda$, $\Lambda$, $c_e$, $p_0$, $p_1$ and $p_2$.

Therefore, combining with (\ref{n319}), we have
\begin{eqnarray*}
  |A_{2t}(u,\Omega)\cap Q_1|&\leq&|A_t(u-h,\Omega)\cap Q_1|+|A_t(h,\Omega)\cap Q_1|\\
  &\leq& C\varepsilon_1^{\gamma\mu}t^{-\mu}+C_3t^{-p_1}.
\end{eqnarray*}
Taking $\tilde{C}=2C_32^{p_1}$, $t=M_1/2$ and $\varepsilon_1$ small enough, then we have proved the lemma.
\end{proof}

\begin{lemma}\label{le3.8}
Let $\Omega$ be a bounded domain such that $B_{8\sqrt{n}}\subset \Omega$ and $u\in C(\Omega)$ be a $W^{2,p_0}-$viscosity solution of $F(D^2u,x)=f(x)$ in $B_{8\sqrt{n}}$. Suppose that $F(0,\cdot)\equiv0$ and $F(D^2w,0)=0$ has $W^{2,p_1}$ interior estimates with constant $c_e$. Assume that $\|f\|_{L^{p_0}(B_{8\sqrt{n}})}\leq \varepsilon_1$, $\|\beta\|_{L^{p_2}(B_{7\sqrt{n}})}=\|\beta(\cdot,0)\|_{L^{p_2}(B_{7\sqrt{n}})}\leq \varepsilon_1$. Then
\begin{equation}\label{eq3.10}
   G_1(u,\Omega)\cap Q_3\neq\emptyset\Rightarrow |G_M(u,\Omega)\cap Q_1|\geq 1-\bar\varepsilon,
\end{equation}
where $\bar\varepsilon$ and $\varepsilon_1$ are the same as in Lemma \ref{le3.7}, and $M=c_nM_1$ (see (\ref{add_eq3.2})).
\end{lemma}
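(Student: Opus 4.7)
The plan is to reduce to Lemma~\ref{le3.7} by subtracting a linear function from $u$ and then rescaling. Fix any $x_1\in G_1(u,\Omega)\cap Q_3$. By Definition~\ref{de2.11} there exist linear functions $l_+,l_-$ such that the paraboloids $P_\pm(x)=u(x_1)+l_\pm(x-x_1)\pm\tfrac12|x-x_1|^2$ satisfy $P_-\leq u\leq P_+$ on $\Omega$ with equality at $x_1$. First I would argue $l_+=l_-$: the difference $(P_+-P_-)(x)=(l_+-l_-)(x-x_1)+|x-x_1|^2$ is non-negative on $\Omega\supset B_{8\sqrt n}$, and plugging $x-x_1=-\varepsilon(l_+-l_-)$ (legitimate for all small $\varepsilon>0$ since $|x_1|\leq\tfrac{3\sqrt n}{2}$) gives $|l_+-l_-|^2(\varepsilon^2-\varepsilon)\geq 0$, which forces $l_+=l_-=:l$.

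Next I would set $L(x):=u(x_1)+l(x-x_1)$ and $\tilde u:=u-L$, which is still a $W^{2,p_0}$--viscosity solution of $F(D^2\tilde u,x)=f(x)$ and satisfies the pointwise sandwich $|\tilde u(x)|\leq\tfrac12|x-x_1|^2$ on $\Omega$. On $B_{8\sqrt n}$ this gives $\|\tilde u\|_{L^\infty}\leq 50n$, so I would set $c_n:=50n$ (the $c_n$ in the statement of the lemma) and $\tilde F(M,x):=F(c_nM,x)/c_n$, so that the rescaled function $v:=\tilde u/c_n$ is a $W^{2,p_0}$--viscosity solution of $\tilde F(D^2 v,x)=f(x)/c_n$. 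The operator $\tilde F$ is $(\lambda,\Lambda)$--uniformly elliptic with $\tilde F(0,\cdot)\equiv 0$, has the identical $\beta$--function, and inherits $W^{2,p_1}$ interior estimates with the \emph{same} constant $c_e$: rescaling a touching solution of $\tilde F(D^2\tilde w,0)=0$ by $c_n$ produces a touching solution of $F(D^2 W,0)=0$ with both boundary data and $W^{2,p_1}$ norm multiplied by $c_n$. I would then verify the remaining hypotheses of Lemma~\ref{le3.7}: $\|v\|_{L^\infty(B_{8\sqrt n})}\leq 1$ by construction; for $x\in\Omega\setminus B_{6\sqrt n}$ the bound $|x-x_1|\leq\tfrac{5}{4}|x|$ yields $|v(x)|\leq\tfrac{25}{32\,c_n}|x|^2\leq|x|^2$; and $\|f/c_n\|_{L^{p_0}}\leq\varepsilon_1$, $\|\beta\|_{L^{p_2}}\leq\varepsilon_1$ are preserved since $c_n>1$.

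Applying Lemma~\ref{le3.7} to $v$ now yields $|G_{M_1}(v,\Omega)\cap Q_1|\geq 1-\bar\varepsilon$. Finally, subtracting an affine function does not change second derivatives, and hence does not change the openings of touching paraboloids, while multiplying by $c_n$ rescales every opening by exactly $c_n$; thus $G_{M_1}(v,\Omega)=G_{c_nM_1}(u,\Omega)$. Setting $M:=c_nM_1$ therefore gives the conclusion \eqref{eq3.10}.

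The step requiring the most care is not any single estimate but the bookkeeping around the rescaling: one must simultaneously verify that $\tilde F$ keeps the same ellipticity constants, the same $\beta$--modulus, the same interior-estimate constant $c_e$, and that the quadratic growth bound on $\Omega\setminus B_{6\sqrt n}$ survives the normalization, all with a single universal choice of $c_n$. Once that is tracked, Lemma~\ref{le3.7} finishes the proof.
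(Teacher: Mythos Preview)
Your argument is correct and follows essentially the same route as the paper: subtract the affine function coming from the touching point $x_1\in G_1(u,\Omega)\cap Q_3$, divide by a dimensional constant $c_n$ to normalize, and apply Lemma~\ref{le3.7} to the resulting function. You have simply been more explicit than the paper on two points---proving that the upper and lower touching paraboloids share the same linear part, and tracking the value of $c_n$ and the preservation of $c_e$, $\beta$, and the ellipticity constants under the rescaling---none of which the paper spells out.
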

\begin{proof}
Let $x_1\in G_1(u,\Omega)\cap Q_3$. It follows that there is an affine function $L$ such that
\begin{equation*}\label{}
-\frac{1}{2}|x-x_1|^2\leq u(x)-L(x)\leq \frac{1}{2}|x-x_1|^2~~~~\mbox{in}~\Omega.
\end{equation*}
Define
\begin{equation}\label{add_eq3.1}
    v=\frac{u-L}{c_n},
\end{equation}
where $c_n$ is a large number depending on $n$ such that $\|v\|_{L^{\infty}(B_{8\sqrt{n}})}\leq1$ and
\begin{equation*}
-|x|^2\leq v(x)\leq |x|^2~~\mbox{in}~~\Omega\backslash B_{6\sqrt{n}}.
\end{equation*}
Apply Lemma \ref{le3.7} to $v$, which is the $W^{2,p_0}-$viscosity solution of
\begin{equation*}
    \frac{1}{c_n}F(c_{n}D^2v,x)=\frac{f(x)}{c_n}~~~~\mbox{in}~B_{8\sqrt{n}},
\end{equation*}
and get $|G_{M_{1}}(v,\Omega)\cap Q_1|\geq 1-\bar\varepsilon$. Hence $|G_{M}(u,\Omega)\cap Q_1|=|G_{c_nM_1}(u,\Omega)\cap Q_1|=|G_{M_1}(v,\Omega)\cap Q_1|\geq 1-\bar\varepsilon.$
\end{proof}

\begin{lemma}\label{le3.9}
Under the hypothesis of Theorem \ref{th3.3} where $\varepsilon$ is small enough and depends only on $n$, $\lambda$, $\Lambda$, $c_e$, $p$, $p_0$, $p_1$ and $p_2$, extend $f$ by zero outside $B_{8\sqrt{n}}$ and let, for $k=0,1,2,...,$
\begin{equation*}
    \begin{split}
      A &=A_{M^{k+1}}(u,B_{8\sqrt{n}})\cap Q_1, \\
      B &=\left(A_{M^{k}}(u,B_{8\sqrt{n}})\cap Q_1\right)\cup\left\{x\in Q_1: m(f^{p_0})(x)\geq (c_3M^k)^{p_0}\right\}.
    \end{split}
\end{equation*}
Then
\begin{equation*}
    |A|\leq \bar\varepsilon|B|,
\end{equation*}
Where $M>1$ and $0<\bar\varepsilon<1$ are the same as in Lemma \ref{le3.8}, and $c_3>0$ depends only on $n$, $\lambda$, $\Lambda$ and $c_e$.
\end{lemma}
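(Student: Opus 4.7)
The plan is to apply the Calderón--Zygmund-type Proposition \ref{pr2.13} with the sets $A$ and $B$ as given. First observe $A\subset B$, since larger opening gives easier touching, hence $A_{M^{k+1}}(u,B_{8\sqrt{n}})\subset A_{M^k}(u,B_{8\sqrt{n}})$. I then need to verify the two hypotheses of Proposition \ref{pr2.13}: (i) $|A|\leq \bar\varepsilon$, and (ii) whenever $Q\subset Q_1$ is a dyadic cube with $|A\cap Q|>\bar\varepsilon |Q|$, the predecessor $\tilde Q$ lies in $B$.

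Condition (i) is a direct application of Lemma \ref{le3.7} to $u$ itself with $\Omega=B_{8\sqrt n}$. The growth bound $-|x|^2\leq u\leq|x|^2$ on $B_{8\sqrt n}\setminus B_{6\sqrt n}$ is automatic since $|x|^2\geq 36n\geq 1\geq\|u\|_{L^\infty}$, while the smallness of $f$ in $L^{p_0}$ follows from $\|f\|_{L^p}\leq\varepsilon$ by Hölder (recall $p>p_0$), and the smallness of $\beta$ in $L^{p_2}(B_{7\sqrt n})$ follows from the pointwise hypothesis of Theorem \ref{th3.3} applied at $B_{7\sqrt n}(0)\subset B_{8\sqrt n}$. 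Choosing $\varepsilon\leq c(n)\varepsilon_1$, Lemma \ref{le3.7} yields $|A_{M_1}(u,B_{8\sqrt{n}})\cap Q_1|\leq\bar\varepsilon$, and monotonicity of $A_t$ in $t$ together with $M^{k+1}\geq M_1$ gives $|A|\leq\bar\varepsilon$.

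The main work is condition (ii), which I prove by contrapositive. Suppose $\tilde Q\not\subset B$, so there exists $\tilde x\in\tilde Q$ with $\tilde x\in G_{M^k}(u,B_{8\sqrt{n}})$ and $m(f^{p_0})(\tilde x)<(c_3M^k)^{p_0}$. The first gives an affine $L$ with $|u(x)-L(x)|\leq \tfrac{M^k}{2}|x-\tilde x|^2$ on $B_{8\sqrt{n}}$. Let $x_0$ and $r\leq 1/2$ be the center and side length of $Q$. Rescale via $x=x_0+ry$ and define
\begin{equation*}
\tilde u(y)=\frac{u(x_0+ry)-L(x_0+ry)}{K_n M^k r^2},\qquad \tilde F(N,y)=\frac{1}{K_n M^k}F\bigl(K_n M^k N,x_0+ry\bigr),
\end{equation*}
where $K_n$ is a dimensional constant chosen large enough that $\|\tilde u\|_{L^\infty(B_{8\sqrt n})}\leq 1$; this is possible because $B_{8\sqrt n r}(x_0)\subset B_{5\sqrt n}\subset B_{8\sqrt n}$ and $|x-\tilde x|\leq (19/2)\sqrt n\, r$ on this ball. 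Then $\tilde u$ is a $W^{2,p_0}$-viscosity solution of $\tilde F(D^2\tilde u,y)=\tilde f(y)$ with $\tilde f(y)=f(x_0+ry)/(K_n M^k)$, $\tilde F(0,\cdot)\equiv 0$, and $\tilde F(D^2 w,0)=0$ inherits the $W^{2,p_1}$ interior estimates with the same constant $c_e$ (scale-invariance). Also $\tilde y:=(\tilde x-x_0)/r$ lies in $Q_3$, and the paraboloid bound rescales to $|\tilde u(y)|\leq \tfrac{1}{2K_n}|y-\tilde y|^2\leq\tfrac{1}{2}|y-\tilde y|^2$ (since $K_n\geq 1$), so $\tilde y\in G_1(\tilde u,B_{8\sqrt n})\cap Q_3$.

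To invoke Lemma \ref{le3.8} for $\tilde u$, I check the two smallness conditions. A change of variables gives $\tilde\beta(y,0)=\beta(x_0+ry,x_0)$, and the hypothesis of Theorem \ref{th3.3} applied with $B_{7\sqrt n r}(x_0)\subset B_{8\sqrt n}$ yields $\|\tilde\beta\|_{L^{p_2}(B_{7\sqrt n})}\leq C(n)\varepsilon\leq\varepsilon_1$, provided $\varepsilon$ is chosen small enough. For $\tilde f$, enlarging $B_{8\sqrt n r}(x_0)$ to a concentric ball $B_{10\sqrt n r}(\tilde x)$ and using the maximal function bound at $\tilde x$ gives
\begin{equation*}
\|f\|_{L^{p_0}(B_{8\sqrt n r}(x_0))}^{p_0}\leq |B_{10\sqrt n r}(\tilde x)|\, m(f^{p_0})(\tilde x)\leq C(n)\, r^n (c_3 M^k)^{p_0},
\end{equation*}
which after rescaling gives $\|\tilde f\|_{L^{p_0}(B_{8\sqrt n})}\leq C(n)c_3/K_n\leq\varepsilon_1$, upon choosing $c_3$ small enough (in terms of $n,\lambda,\Lambda,c_e$). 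Lemma \ref{le3.8} then produces $|G_M(\tilde u,\cdot)\cap Q_1|\geq 1-\bar\varepsilon$ in $y$-coordinates, and unscaling shows that each such point corresponds to an $x'\in Q$ admitting touching paraboloids of opening $K_n M\cdot M^k$ in $x$-coordinates. By choosing the constant $c_n$ in Lemma \ref{le3.8} (in the definition $M=c_n M_1$) large enough to absorb $K_n$, this opening is bounded by $M^{k+1}$. Thus $|G_{M^{k+1}}(u,B_{8\sqrt n})\cap Q|\geq (1-\bar\varepsilon)|Q|$, i.e., $|A\cap Q|\leq\bar\varepsilon|Q|$.

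The main obstacle is bookkeeping: tracking how the dimensional scaling factor $K_n$ enters (a) the paraboloid opening, which must remain $\leq M^{k+1}$, (b) the rescaled $L^{p_0}$-norm of $\tilde f$, whose smallness is purchased by choosing $c_3$ appropriately, and (c) the rescaled $L^{p_2}$-norm of $\tilde\beta$, whose smallness uses the full strength of the averaged hypothesis on $\beta$ in Theorem \ref{th3.3}. Once one verifies that all constants can be chosen in a consistent order ($\bar\varepsilon,M_1,M,c_n,c_3,\varepsilon_1,\varepsilon$), the proof is a clean unwinding of Lemmas \ref{le3.7} and \ref{le3.8}.
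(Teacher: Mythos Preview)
Your approach is essentially identical to the paper's: apply Proposition~\ref{pr2.13}, verify hypothesis (i) via Lemma~\ref{le3.7} applied to $u$ on $\Omega=B_{8\sqrt n}$, and verify hypothesis (ii) by contrapositive, rescaling to the dyadic cube, and invoking the local density improvement.

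There is one bookkeeping slip in your unscaling. You pre-normalize by subtracting the affine $L$ and dividing by $K_n$, and then invoke Lemma~\ref{le3.8}. But Lemma~\ref{le3.8} performs exactly this normalization internally (its proof subtracts an affine function and divides by $c_n$ to reduce to Lemma~\ref{le3.7}). Applying Lemma~\ref{le3.8} to your already-normalized $\tilde u$ therefore yields $|G_M(\tilde u,\tilde\Omega)\cap Q_1|\ge 1-\bar\varepsilon$, and after unscaling the opening is $K_n M\cdot M^k$, not $M^{k+1}$. Enlarging $c_n$ does not fix this: since $M=c_nM_1$ appears on both sides, the requirement $K_nM\le M$ forces $K_n\le 1$. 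The paper avoids the issue by \emph{not} pre-normalizing: it sets $\tilde u(y)=\tfrac{1}{M^k r^2}\,u(x_0+ry)$ (no $L$, no dimensional constant), checks that the rescaled point lies in $G_1(\tilde u,\tilde\Omega)\cap Q_3$, and then invokes Lemma~\ref{le3.8}, so that unscaling gives opening exactly $M\cdot M^k=M^{k+1}$. Equivalently, since your $\tilde u$ already satisfies all the hypotheses of Lemma~\ref{le3.7} (your $K_n$ is playing the role of $c_n$), you may invoke Lemma~\ref{le3.7} directly to obtain $|G_{M_1}(\tilde u,\tilde\Omega)\cap Q_1|\ge 1-\bar\varepsilon$; the unscaled opening is then $K_nM_1\cdot M^k=M^{k+1}$ once you identify $K_n=c_n$. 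With this correction your argument is complete and matches the paper's.
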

\begin{proof}
We use Proposition \ref{pr2.13} with $\delta=\bar\varepsilon$. Clearly, $A\subset B\subset Q_1$. Without loss of generality, we assume that $F(D^2w,0)=0$ has $W^{2,p_1}$ interior estimates. Since $\|u\|_{L^{\infty}(B_{8\sqrt{n}})}\leq 1$ and $\|\beta\|_{L^{p_2}(B_{7\sqrt{n}})}\leq c(n,p_2)\varepsilon$, Lemma \ref{le3.7} applied with $\Omega=B_{8\sqrt{n}}$, gives that
\begin{equation*}
|G_{M^{k+1}}(u,B_{8\sqrt{n}})\cap Q_1|\geq |G_{M}(u,B_{8\sqrt{n}})\cap Q_1|\geq |G_{M_1}(u,B_{8\sqrt{n}})\cap Q_1|\geq 1-\bar\varepsilon
\end{equation*}
by taking $\varepsilon$ small enough such that $c(n,p_2)\varepsilon\leq\varepsilon_1$ (We will always take $\varepsilon$ small enough so that we can apply Lemma \ref{le3.7} and Lemma \ref{le3.8}.). Hence, $|A|\leq \bar\varepsilon$. It remains to check that if $Q=Q_{1/2^i}(x_0)$ is a dyadic cube of $Q_1$ such that
\begin{equation}\label{eq3.11}
    |A_{M^{k+1}}(u,B_{8\sqrt{n}})\cap Q|=|A\cap Q|>\bar\varepsilon Q
\end{equation}
then $\tilde Q\subset B$ (recall that $\tilde Q$ is the predecessor of $Q$).

First, we assume that $F(D^2w,x_0)=0$ has $W^{2,p_1}$ interior estimates. Suppose $\tilde Q\nsubseteq B$ and take $x_1$ such that
\begin{equation}\label{eq3.12}
    x_1\in \tilde Q\cap G_{M^k}(u,B_{8\sqrt{n}})
\end{equation}
and
\begin{equation}\label{eq3.13}
m(f^{p_0})(x_1)< (c_3M^k)^{p_0}.
\end{equation}

Consider the transformation
\begin{equation*}
    x=x_0+\frac{1}{2^i}y,~~~~y\in Q_1,~x\in Q
\end{equation*}
and the function
\begin{equation*}
    \tilde u(y)=\frac{2^{2i}}{M^k}u\left(x_0+\frac{1}{2^i}y\right).
\end{equation*}
Let us check that $\tilde{u}$ satisfies the hypothesis of Lemma \ref{le3.8} with $\Omega$ replaced by $\tilde{\Omega}$, the image of $\Omega$ under the translation above; here $\Omega=B_{8\sqrt{n}}$.

Since $B_{8\sqrt{n}/2^i}\subset B_{8\sqrt{n}}$, we have that $B_{8\sqrt{n}}\subset\tilde\Omega$. Clearly $\tilde u(y)$ is a $W^{2,p_0}-$viscosity solution of $G(D^2\tilde u(y),y)=\tilde f(y)$ in $B_{8\sqrt{n}}$ where
\begin{equation*}
    G(D^2w,y)=\frac{1}{M^k}F(M^kD^2w,x_0+\frac{1}{2^i}y)
\end{equation*}
and
\begin{equation*}
    \tilde f(y)=\frac{1}{M^k}f(x_0+\frac{1}{2^i}y).
\end{equation*}
Also we have that $G(0,y)=F(0,x)\equiv0$ and $G(D^2w,0)=\frac{1}{M^k}F(M^kD^2w,x_0)$ has $W^{2,p_1}$ interior estimates with constant $c_e$. Next, since
\begin{equation*}
    \beta_G(y)=\beta_G(y,0)=\beta_F(x,x_0).
\end{equation*}
and $B_{7\sqrt{n}/2^i}(x_0)\subset B_{8\sqrt{n}}$, our assumptions on $\beta(x,x_0)$ imply that $\|\beta_G\|_{L^{p_2}(B_{7\sqrt{n}})}\leq c(n,p_2)\varepsilon$.

Note that $|x_1-x_0|\leq 2\sqrt{n}/2^{i}$ implies $B_{8\sqrt{n}/2^i}(x_0)\subset Q_{20\sqrt{n}/2^i}(x_1)$. Hence
\begin{equation*}
    \|\tilde{f}(y)\|^{p_0}_{L^{p_0}(B_{8\sqrt{n}})}=\frac{2^{in}}{M^{kp_0}}\int_{B_{8\sqrt{n}/2^i}(x_0)}|f(x)|^{p_0}dx\leq\frac{2^{in}}{M^{kp_0}}\int_{Q_{20\sqrt{n}/2^i}(x_1)}|f(x)|^{p_0}dx\leq c(n)c_3^{p_0}\leq \varepsilon_1^{p_0},
\end{equation*}
by (\ref{eq3.13}); we have taken $c_3$ small enough such that the last inequality is true. Finally, by (\ref{eq3.12}), $G_1(\tilde u,\tilde \Omega)\cap Q_3\neq\emptyset$. Then Lemma \ref{le3.8} gives
\begin{equation*}
    |G_M(\tilde u,\tilde \Omega)\cap Q_1|\geq 1-\bar\varepsilon=(1-\bar\varepsilon)|Q_1|,
\end{equation*}
which implies
\begin{equation*}
    |G_{M^{k+1}}( u, B_{8\sqrt{n}})\cap Q|\geq(1-\bar\varepsilon)|Q|.
\end{equation*}
This is a contradiction with (\ref{eq3.11}).

If $F(D^2u,x_0)$ doesn't have $W^{2,p_1}$ interior estimates, then for any $\delta>0$, we take a cube $Q_l(\tilde x_0)\subset Q$ such that $|Q\backslash Q_l(\tilde x_0)|<\delta$ and $\tilde Q\subset Q_{4l}(\tilde x_0)$. Then we can process the proof as above with noting that $G_1(u,\Omega)\cap Q_4\neq \emptyset$ also implies $|G_M(u,\Omega)\cap Q_1|\geq 1-\bar\varepsilon$ (see (\ref{eq3.10})). Since $\delta$ is arbitrary, we draw the conclusion.
\end{proof}

Now, we give the main result Theorem \ref{th1.2}'s

\begin{proof}
Recall that we only need to prove (\ref{eq3.3}).
We define, for $k\geq 0$,
\begin{equation*}
    \alpha_k=|A_{M^k}(u,B_{8\sqrt{n}})\cap Q_1|,~\beta_k=|\{x\in Q_1:m(f^{p_0})(x)\geq (c_3M^k)^{p_0}\}|.
\end{equation*}
By Lemma \ref{le3.9},
\begin{equation*}
\alpha_{k+1}\leq \bar\varepsilon(\alpha_k+\beta_k).
\end{equation*}
Hence
\begin{equation}\label{eq3.14}
    \alpha_k\leq \bar\varepsilon^k+\sum_{i=0}^{k-1}\bar\varepsilon^{k-i}\beta_i.
\end{equation}
Since $f^{p_0}\in L^{p/p_0}( B_{8\sqrt{n}})$, we have that $m(f^{p_0})\in L^{p/p_0}( B_{8\sqrt{n}})$ and
\begin{equation*}
    \|m(f^{p_0})\|_{L^{p/p_0}( B_{8\sqrt{n}})}\leq C\|f\|^{p_0}_{L^p( B_{8\sqrt{n}})}\leq C.
\end{equation*}
Hence, by Proposition \ref{pr2.14},
\begin{equation}\label{eq3.15}
\sum_{k\geq 0}^{}M^{pk}\beta_k\leq C.
\end{equation}
Finally, combining with (\ref{add_eq3.2}), (\ref{eq3.14}) and (\ref{eq3.15}), we obtain (recall $M=c_nM_1$):
\begin{equation*}
    \begin{split}
       \sum_{k\geq 1}^{}M^{pk}\alpha_k &\leq \sum_{k\geq 1}^{}(\bar\varepsilon M^p)^k+\sum_{k\geq 1}^{}\sum_{i=0}^{k-1}\bar\varepsilon^{k-i}M^{p(k-i)}M^{pi}\beta_i \\
         &\leq \sum_{k\geq 1}^{}2^{-k}+\left(\sum_{i\geq 0}^{}M^{pi}\beta_i\right)\left(\sum_{j\geq 1}2^{-j}\right)\leq C.
     \end{split}
\end{equation*}
where $C$ is a constant depending only on $n$, $\lambda$, $\Lambda$, $c_e$, $p_0$, $p_1$, $p_2$ and $p$.
\end{proof}



\affiliationone{
   Dongsheng Li and Kai Zhang\\
   School of Mathematics and Statistics\\
   Xi'an Jiaotong University\\
   Xi'an 710049\\
   China\\
   \email{lidsh@mail.xjtu.edu.cn\\
   zkzkzk@stu.xjtu.edu.cn}}

\begin{thebibliography}{00}
\bibitem{Ca} L. A. Caffarelli, `Interior a priori estimates for solutions of fully nonlinear equations', \emph{Ann. Math.} 130 (1989) 189-213.
\bibitem{C-C} L. A. Caffarelli and X. Cabr\'{e}, \emph{Fully nonlinear elliptic equations}, American Mathematical Society Colloquium Publications Series 43 (American Mathematical Society, Providence, 1995).
\bibitem{C-C-K-S} L. A. Caffarelli, M. G. Crandall, M. Kocan and A. \'{S}wiech, `On viscosity solutions of fully nonlinear equations with measurable ingredients', \emph{Comm. Pure Appl. Math.} (4) 49 (1996) 365-397.
\bibitem{C-F} R. R. Coifman and C. Fefferman, `Weighted norm inequalities for maximal functions and singular integrals', \emph{Studia Math.} 51 (1974) 241-250.
\bibitem{Es} L. Escauriaza,`$W^{2,n}$ a priori estimates for solutions to fully non-linear equations', \emph{Indiana Univ. Math. J.} 42 (1993) 413-423.
\bibitem{Fo} P. K. Fok, \emph{Some maximum principles and continuity estimates for fully nonlinear elliptic equations of second order}, Ph.D. Thesis (Univ. California, Santa Barbara, 1996).
\bibitem{Lin} F. H. Lin, `Second derivative $L^p$-estimates for elliptic equations of nondivergent type', \emph{Proc. Amer. Math. Soc.} (3) 96 (1986) 447-451.
\bibitem{N-V-3} N. Nadirashvili and S. Vladut, `Octonions and singular solutions of Hessian elliptic equations', \emph{Geom. Funct. Anal.} 21 (2011) 483-498.
\bibitem{N-V-4} N. Nadirashvili and S. Vladut, `Singular solutions of Hessian fully nonlinear elliptic equations', \emph{Adv. Math.} 228 (2011) 1718-1741.
\bibitem{N-T-V} N. Nadirashvili, T. Vladimir and S. Vladut, `A non-classical solution to a Hessian equation from Cartan isoparametric cubic', \emph{Adv. Math.} 231 (2012) 1589-1597.
\bibitem{Pu} C. Pucci, `Operatori ellittici estremali', \emph{Ann. Mat. Pura Appl.} 72 (1966) 141-170.
\bibitem{Sw} A. \'{S}wiech, `$W^{1,p}-$interior estimates for solutions of fully nonlinear, uniformly elliptic equations', \emph{Adv. Diff. Eqs.} (6) 2 (1997) 1005-1027.
\bibitem{Wa} Lihe Wang, `On the regularity theory of fully nonlinear parabolic equations: I', \emph{Comm. Pure Appl. Math.} (1) 45 (1992) 27-76.
\bibitem{Wi} N. Winter, `$W^{2,p}$ and $W^{1,p}-$ estimates at the boundary for solutions of fully nonlinear, uniformly elliptic equations', \emph{Z. Anal. Anwend.} (2) 28 (2009) 129-164.
\end{thebibliography}
\end{document}